\renewcommand{\vec}[1]{{\bf #1}}
\newcommand{\tinybox}{\vrule\vbox to.333em{\hrule width.25em\vfil\hrule}\vrule}
\newcommand{\up}[2]{\mathop{\mathstrut #1}\limits^{#2}\mathstrut}
\newcommand{\circup}[1]{\up{#1}{\circ}}
\newcommand{\hatup}[1]{\up{#1}{\mbox{\tiny$\wedge$}}}
\newcommand{\boxup}[1]{\up{#1}{\mbox{\tinybox}}}
\newcommand{\GL}{\mathop{{\bf GL}}\nolimits}
\newcommand{\AGL}{\mathop{{\bf AGL}}\nolimits}
\newcommand{\func}[1]{{\mathcal #1}}
\newcommand{\ZZ}{{\mathbb Z}}
\newcommand{\FF}{{\mathbb F}}
\newcommand{\CC}{{\mathbb C}}
\newcommand{\GaussCoeff}[3]{\left(\genfrac{}{}{0pt}{}
 {\mathstrut#1}{\mathstrut#2}\right)_{#3}}
\theoremstyle{plain}
\newtheorem{lemma}{Lemma}
\newtheorem{proposition}{Proposition}
\theoremstyle{definition}
\newtheorem{example}{Example}
\begin{document}
\begin{sloppypar}

\title{Bent Rectangles\footnote{
Submitted to:
{\it Proceedings of the NATO Advanced Study Institute on
Boolean Functions in Cryptology and Information Security
(Moscow, September 8--18, 2007)}.
}}

\author{Sergey Agievich\\
\small National Research Center
for Applied Problems of Mathematics and Informatics\\[-0.8ex]
\small Belarusian State University\\[-0.8ex]
\small Nezavisimosti av. 4, 220030 Minsk, Belarus\\[-0.8ex]
\small \texttt{agievich@bsu.by}
}

\date{}

\maketitle

\begin{abstract}
We study generalized regular bent functions using a representation
by bent rectangles, that is, special matrices with restrictions 
on rows and columns.
We describe affine transformations of bent rectangles,
propose new biaffine and bilinear constructions,
study partitions of a vector space into affine planes of the same dimension 
and use such partitions to build bent rectangles.
We illustrate the concept of bent rectangles by examples for the Boolean case. 
\end{abstract}



\section*{Introduction}\label{BRECTS.Intro}

Boolean bent functions were introduced by Rothaus~\cite{BRECTS.Rot76}
and Dillon~\cite{BRECTS.Dil72}, 
and since then have been widely studied owing to their 
interesting algebraic and combinatorial properties 
and because of their applications in signal processing, coding theory, 
cryptography. 
The current results on the construction, classification, enumeration
of bent functions can be found in the 
surveys~\cite{BRECTS.Car00,BRECTS.DobLea04}.

In the present paper we study bent functions using a representation 
by bent rectangles. 
Rows and normalized columns of such rectangles consist 
of the Walsh--Hadamard coefficients of Boolean functions
and the Boolean functions of rows are simply restrictions of a target 
bent function to a subset of variables.

The notion of bent rectangles was introduced in~\cite{BRECTS.Agi02}
and has analogues in other papers.
For example, a special kind of rectangles, 
all elements of which are equal in magnitude,
was proposed in~\cite{BRECTS.AdaTav90}.
Authors of~\cite{BRECTS.CanCha03} also intensively dealt with restrictions 
of~bent functions and actually used 2- and 4-row bent rectangles.
In~\cite[preliminary version]{BRECTS.DobLea05}, 
$\ZZ$-bent squares that are defined for integer-valued functions
are used to construct Boolean bent functions recursively. 

We extend the results of~\cite{BRECTS.Agi02} in the following directions. 
In Sections~\ref{BRECTS.Funcs}, \ref{BRECTS.Rects} 
we transfer the concept of bent rectangles
to the generalized regular bent functions 
over an arbitrary quotient ring of integers (see \cite{BRECTS.Kum85}). 
In Section~\ref{BRECTS.ATrans} we describe affine transformations 
of bent rectangles. 
In Section~\ref{BRECTS.Bi} we propose new constructions of bent functions
which are based on biaffine and bilinear mappings.
In Section~\ref{BRECTS.APart} 
we study partitions of a vector space into affine planes of 
the same dimension and use such partitions to construct bent rectangles.
Additionally, in Section~\ref{BRECTS.Examples} 
we illustrate some known properties of Boolean bent functions 
by using bent rectangles.

The author plans to continue this paper and discuss further
dual bent functions, bent rectangles with a small number of rows,
and cubic Boolean bent functions.


\section{Functions}\label{BRECTS.Funcs}

Let $\ZZ_q$ be the ring of integers modulo~$q$ which 
we identify with the set $\{0,1,\ldots,q-1\}$.
Denote by~$\circup{\ZZ}_q$ the set of all $q$th roots of unity in~$\CC$
and introduce the additive character $\chi\colon\ZZ_q\to\circup{\ZZ}_q$, 
$\chi(a)=\exp(2\pi i a/q)$, where $i=\sqrt{-1}$. 
The ring of integers modulo a prime~$q$ is a field.
We emphasize this fact by writing~$\FF_q$ instead of $\ZZ_q$.

The set~$V_n=\ZZ_q^n$ consists of all vectors~$\vec{a}=(a_1,\ldots,a_n)$, 
$a_i\in\ZZ_q$, and forms a group under vector addition.
If~$q$ is prime, then~$V_n$ is 
the~$n$-dimensional vector space over the field~$\FF_q$.

Let~$\func{F}_n$ be the set of all functions $V_n\to\ZZ_q$.
A function~$f\in\func{F}_n$ depends on $n$ variables~---
coordinates of the argument~$\vec{x}=(x_1,\ldots,x_n)$.
Given $J=\{j_1,\ldots,j_m\}\subseteq\{1,\ldots,n\}$,
the restriction of~$f(\vec{x})$ to~$(x_{j_1},\ldots,x_{j_m})$
is a function obtained from~$f$ by keeping variables $x_j$, $j\not\in J$,
constant.

Starting from $f$, construct the function 
$\circup{f}\colon V_n\to\circup{\ZZ}_q$, 
$\circup{f}(\vec{x})=\chi(f(\vec{x}))$,
and the function $\hatup{f}\colon V_n\to\CC$,
$$
\hatup{f}(\vec{u})=
\sum_{\vec{x}\in V_n}
\circup{f}(\vec{x})
\overline{\chi(\vec{u}\cdot\vec{x})},\quad 
\vec{u}\in V_n.
$$
Here~$\vec{u}\cdot\vec{x}=u_1 x_1+\ldots+u_n x_n$
and the bar indicates complex conjugation.
It is convenient to assume that~$\func{F}_0$ consists 
of constant functions~$f\equiv c$, $c\in\ZZ_q$, 
for which~$\circup{f}=\hatup{f}\equiv\chi(c)$.
Denote $\circup{\func{F}}_n=\{\circup{f}\colon f\in\func{F}_n\}$, 
$\hatup{\func{F}}_n=\{\hatup{f}\colon f\in\func{F}_n\}$.

The conversion $\circup{f}\mapsto\hatup{f}$ (or $f\mapsto\hatup{f}$)
is called the {\it Walsh--Hadamard transform}.
Since 
$$
\sum_{\vec{u}\in V_n}\chi(\vec{a}\cdot\vec{u})=\left\{
\begin{array}{rl}
q^n, & \text{$\vec{a}=\vec{0}$},\\
0    & \text{otherwise}\\
\end{array}
\right.
$$
for each~$\vec{a}\in V_n$, 
the inverse transform~$\hatup{f}\mapsto\circup{f}$ can be defined as follows:
\begin{align*}
\circup{f}(\vec{x})&=
q^{-n}\sum_{\vec{y}\in V_n}\circup{f}(\vec{y})
\sum_{\vec{u}\in V_n}
\chi(\vec{u}\cdot(\vec{x}-\vec{y}))\\
&=q^{-n}\sum_{\vec{u}\in V_n}\sum_{\vec{y}\in V_n}
\circup{f}(\vec{y})
\overline{\chi(\vec{u}\cdot\vec{y})}
\chi(\vec{u}\cdot\vec{x})\\
&=q^{-n}\sum_{\vec{u}\in V_n}
\hatup{f}(\vec{u})\chi(\vec{u}\cdot\vec{x}).
\end{align*}

The Walsh--Hadamard transform is often used in cryptography,
coding theory, signal processing to derive some properties of~$f$.
In many cases it is important to use functions~$f$ with 
the characteristic~$\max_{\vec{u}}|\hatup{f}(\vec{u})|$
as small as possible.
Due to the Parseval's identity
$$
\sum_{\vec{u}\in V_n}
|\hatup{f}(\vec{u})|^2=
q^{2n},
$$
we have $\max_{\vec{u}}|\hatup{f}(\vec{u})|\geq q^{n/2}$,
where the equality holds if and only if
\begin{equation}\label{Eq.BRECT.BentDef}
|\hatup{f}(\vec{u})|=q^{n/2},\quad
\vec{u}\in V_n.
\end{equation}

If $f$ satisfies~\eqref{Eq.BRECT.BentDef}, 
then it is called a {\it bent} function.
If, additionally to~\eqref{Eq.BRECT.BentDef},
$\hatup{f}(\vec{u})\in q^{n/2}\circup{\ZZ}_q$ for all $\vec{u}$, 
then~$f$ is called a {\it regular} bent function.
Let~$\func{B}_n$ be the set of all regular bent functions of
$n$ variables. 

Further all subscripts, superscripts and other notations
defined for $\func{F}_n$ are automatically transferred 
to all subsets of $\func{F}_n$.
For example, 
$\circup{\func{B}}_n=\{\circup{f}\colon f\in\func{B}_n\}$,
$\hatup{\func{B}}_n=\{\hatup{f}\colon f\in\func{B}_n\}$.
Note that
$\hatup{\func{B}}_n=q^{n/2}\circup{\func{B}}_n$.


\section{Rectangles}\label{BRECTS.Rects}

Let~$m$, $k$ be nonnegative integers and~$f\in\func{F}_{m+k}$.
Define the function $\boxup{f}\colon V_{m+k}\to\CC$,
\begin{equation}\label{Eq.BRECT.RectFDef}
\boxup{f}(\vec{u},\vec{v})=
\sum_{\vec{y}\in V_k}
\circup{f}(\vec{u},\vec{y})
\overline{\chi(\vec{v}\cdot\vec{y})},\quad
\vec{u}\in V_m,\quad
\vec{v}\in V_k,
\end{equation}
and call it a {\it rectangle} of~$f$.
Denote by~$\boxup{\func{F}}_{m,k}$ the set of all such rectangles.
For the case $m=k$ we also call $\boxup{f}$ a {\it square} of~$f$.

To each function~$f$ there correspond different rectangles of the sets
$\boxup{\func{F}}_{0,m+k}=\hatup{\func{F}}_{m+k}$,
$\boxup{\func{F}}_{1,m+k-1},\ldots$,
$\boxup{\func{F}}_{m+k-1,1}$,
$\boxup{\func{F}}_{m+k,0}=\circup{\func{F}}_{m+k}$.
These rectangles are connected with each other.
For example, if~$\boxup{f}\in\boxup{\func{F}}_{m,k}$ 
and~$\boxup{f}^*\in\boxup{\func{F}}_{m+1,k-1}$, then
\begin{align*}
\boxup{f}(\vec{u},v,\vec{v}')&=
\sum_{y\in\ZZ_q}\sum_{\vec{y}'\in V_{k-1}}
\circup{f}(\vec{u},y,\vec{y}')
\overline{\chi(vy+\vec{v}'\cdot\vec{y}')}\\
&=
\sum_{y\in\ZZ_q}\boxup{f}^*(\vec{u},y,\vec{v}')\overline{\chi(vy)},\quad
\vec{v}'\in V_{k-1},\\
\intertext{and conversely,}
\boxup{f}^*(\vec{u},v,\vec{v}')&=
\frac{1}{q}\sum_{y\in\ZZ_q}\boxup{f}(\vec{u},y,\vec{v}')\chi(vy).
\end{align*}

For a fixed $\vec{u}$ call the
mapping~$\vec{v}\mapsto\boxup{f}(\vec{u},\vec{v})$ 
a {\it column} of~$\boxup{f}$.
Analogously, for a fixed $\vec{v}$ call the mapping
$\vec{u}\mapsto\boxup{f}(\vec{u},\vec{v})$
a {\it row} of~$\boxup{f}$.
By definition,
each row of~$\boxup{f}$ is an element of~$\hatup{\func{F}}_k$.
If furthermore each column of~$\boxup{f}$
multiplied by~$q^{(m-k)/2}$ is an element of $\hatup{\func{F}}_m$,
then call the rectangle~$\boxup{f}$ {\it bent}.

Our results are based on the following proposition,
first stated in~\cite{BRECTS.Agi02} for the case~$q=2$.
Note that under $m=0$ this proposition 
can be considered as a definition of regular bent functions.


\begin{proposition}\label{Prop.BRECTS.Idea}
A function~$f\in\func{F}_{m+k}$ is regular bent if and only if 
the rectangle $\boxup{f}\in\boxup{\func{F}}_{m,k}$ is bent.
\end{proposition}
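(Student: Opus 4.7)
The plan is to relate the Walsh--Hadamard transform $\hatup{f}$ to the rectangle $\boxup{f}$ by a single iterated-sum identity, and then read off both implications from that identity.

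First I would split the sum defining $\hatup{f}$ over $V_{m+k}$ into a sum over $V_k$ followed by a sum over $V_m$, obtaining
$$
\hatup{f}(\vec{u},\vec{v})
=\sum_{\vec{x}\in V_m}\sum_{\vec{y}\in V_k}
\circup{f}(\vec{x},\vec{y})\,\overline{\chi(\vec{u}\cdot\vec{x}+\vec{v}\cdot\vec{y})}
=\sum_{\vec{x}\in V_m}\boxup{f}(\vec{x},\vec{v})\,\overline{\chi(\vec{u}\cdot\vec{x})}.
$$
Thus for each fixed $\vec{v}\in V_k$, the function $\vec{u}\mapsto\hatup{f}(\vec{u},\vec{v})$ is the Walsh--Hadamard transform (on $V_m$) of the section $\vec{x}\mapsto\boxup{f}(\vec{x},\vec{v})$ of $\boxup{f}$. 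This reduces the claim to translating the pointwise condition $\hatup{f}\in q^{(m+k)/2}\circup{\ZZ}_q$ into a statement about these $V_m$-indexed sections.

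For the ``only if'' direction, assume $f$ is regular bent, so $\hatup{f}=q^{(m+k)/2}\circup{\tilde{f}}$ for some $\tilde{f}\in\func{F}_{m+k}$. Fix $\vec{v}$ and apply the inverse Walsh--Hadamard formula derived in the excerpt to the identity above; this expresses the section at $\vec{v}$ in closed form and in particular shows that $q^{(m-k)/2}\boxup{f}(\cdot,\vec{v})$ coincides with $\hatup{\tilde{g}_\vec{v}}$, where $\tilde{g}_\vec{v}(\vec{u}):=\tilde{f}(-\vec{u},\vec{v})\in\func{F}_m$. Hence every section scaled by $q^{(m-k)/2}$ lies in $\hatup{\func{F}}_m$, which is precisely the bent-rectangle condition. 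For the converse, assume that for every $\vec{v}$ there is $\tilde{g}_\vec{v}\in\func{F}_m$ with $q^{(m-k)/2}\boxup{f}(\cdot,\vec{v})=\hatup{\tilde{g}_\vec{v}}$. Substituting into the iterated-sum identity and collapsing $\sum_\vec{x}\hatup{\tilde{g}_\vec{v}}(\vec{x})\overline{\chi(\vec{u}\cdot\vec{x})}$ to $q^m\circup{\tilde{g}_\vec{v}}(-\vec{u})$ by the same inversion formula yields $\hatup{f}(\vec{u},\vec{v})=q^{(m+k)/2}\chi(\tilde{g}_\vec{v}(-\vec{u}))\in q^{(m+k)/2}\circup{\ZZ}_q$, so $f$ is regular bent.

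The main obstacle is purely bookkeeping: carrying the normalization factor $q^{(m-k)/2}$ through correctly and remembering the sign flip $\vec{u}\mapsto-\vec{u}$ that appears when the inverse transform is rewritten in terms of $\hatup{\,\cdot\,}$. Once the iterated-sum identity is in hand, no deeper idea is required, and the two directions are strictly symmetric.
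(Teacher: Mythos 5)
Your proposal is correct and is essentially the paper's own argument: the identity $\hatup{f}(\vec{u},\vec{v})=\sum_{\vec{x}\in V_m}\boxup{f}(\vec{x},\vec{v})\overline{\chi(\vec{u}\cdot\vec{x})}$ and its inversion are exactly what the paper computes when it introduces the auxiliary function $\circup{g}(\vec{v},\vec{u})=q^{-n/2}\hatup{f}(-\vec{u},\vec{v})$ and verifies $\boxup{g}(\vec{v},\vec{u})=q^{(m-k)/2}\boxup{f}(\vec{u},\vec{v})$. The only cosmetic difference is that you keep the columns as separate functions $\tilde{g}_{\vec{v}}\in\func{F}_m$ while the paper bundles them into a single $n$-variable function $g$ (which it then reuses to define transposition); the normalization and the sign flip $\vec{u}\mapsto-\vec{u}$ are handled correctly in both.
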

\begin{proof}
Let $f\in\func{B}_n$, $n=m+k$.
Define the function $g\in\func{F}_n$ by the rule
$$
\circup{g}(\vec{v},\vec{u})=
q^{-n/2}\hatup{f}(-\vec{u},\vec{v}),\quad
\vec{u}\in V_m,\quad
\vec{v}\in V_k,
$$
and determine the corresponding rectangle
$\boxup{g}\in\boxup{\func{F}}_{k,m}$:
\begin{align*}
\boxup{g}(\vec{v},\vec{u})&=
q^{-n/2}\sum_{\vec{x}\in V_m}\hatup{f}(-\vec{x},\vec{v})
\overline{\chi(\vec{u}\cdot\vec{x})}\\
&=q^{-n/2}
\sum_{\vec{x}\in V_m}
\sum_{\vec{w}\in V_m}
\sum_{\vec{y}\in V_k}
\circup{f}(\vec{w},\vec{y})
\overline{\chi(-\vec{x}\cdot\vec{w}+
\vec{v}\cdot\vec{y}+\vec{u}\cdot\vec{x})}\\
&=q^{-n/2}
\sum_{\vec{y}\in V_k}
\sum_{\vec{w}\in V_m}
\circup{f}(\vec{w},\vec{y})
\overline{\chi(\vec{v}\cdot\vec{y})}
\sum_{\vec{x}\in V_m}
\chi((\vec{w}-\vec{u})\cdot\vec{x})\\
&=q^{m-n/2}
\sum_{\vec{y}\in V_k}
\circup{f}(\vec{u},\vec{y})
\overline{\chi(\vec{v}\cdot\vec{y})}\\
&=q^{(m-k)/2}\boxup{f}(\vec{u},\vec{v}).
\end{align*}
Therefore, each column of $\boxup{f}$ multiplied by $q^{(m-k)/2}$ is
an element of $\hatup{\func{F}}_m$ and $\boxup{f}$ is bent.

Conversely, if $\boxup{f}$ is bent, then 
$\boxup{g}(\vec{v},\vec{u})=q^{(m-k)/2}\boxup{f}(\vec{u},\vec{v})$
is well defined rectangle that corresponds to 
the function $\circup{g}(\vec{v},\vec{u})=q^{-n/2}\hatup{f}(\vec{u},\vec{v})$.
Hence $\hatup{f}(\vec{u},\vec{v})\in q^{n/2}\circup{\ZZ}_q$ 
for all~$\vec{u}$, $\vec{v}$ and~$f$ is regular bent.
\end{proof}

It is convenient to identify $\boxup{f}\in\boxup{\func{F}}_{m,k}$ 
with the $q^m\times q^k$ matrix $\boxup{F}$ whose
rows and columns are marked by lexicographically ordered
vectors of~$V_m$ and $V_k$ respectively and whose elements 
are the values $\boxup{f}(\vec{u},\vec{v})$.

The definition of~$\boxup{F}$ puts restrictions on its rows.
The bentness of $\boxup{F}$ puts additional restrictions on columns.
We can draw an analogy with latin rectangles
and this analogy justify the use of the term ``bent {\it rectangle}''.

If~$\boxup{F}$ corresponds to a bent rectangle 
$\boxup{f}\in\boxup{\func{B}}_{m,k}$ and satisfies restrictions on rows
and columns, then~$\boxup{G}=q^{(m-k)/2}\boxup{F}^{\rm T}$
also satisfies such restrictions and corresponds to
a bent rectangle~$\boxup{g}\in\boxup{\func{B}}_{k,m}$.
Call the transformation~$\boxup{f}\mapsto\boxup{g}$ 
a {\it transposition} of~$\boxup{f}$.
During the proof of Proposition~\ref{Prop.BRECTS.Idea} 
we actually showed that, under the transposition,
$$
\circup{g}(\vec{v},\vec{u})=q^{-n/2}\hatup{f}(-\vec{u},\vec{v}),\quad
\vec{u}\in V_m,\quad
\vec{v}\in V_k.
$$

As ``material'' for constructing bent rectangles 
we will often use affine functions $l(\vec{x})=\vec{b}\cdot\vec{x}+c$,
where $\vec{x},\vec{b}\in V_n$ and $c\in\ZZ_q$.
Denote by $\func{A}_n$ the set of all affine functions of~$n$ variables.

The function $\hatup{l}$ that corresponds to~$l$ 
has the quite simple form:
$$
\hatup{l}(\vec{u})=\left\{
\begin{array}{rl}
q^n\chi(c), & \text{if $\vec{u}=\vec{b}$},\\
0    & \text{otherwise}.
\end{array}
\right.
$$
This fact immediately gives us the following construction.

\begin{example}[Maiorana--McFarland's construction]\label{Ex.BRECTS.Maiorana}
Consider a bent square $\boxup{f}\in\boxup{\func{B}}_{n,n}$
such that all its rows and columns belong to~$\hatup{\func{A}}_n$.
The matrix~$\boxup{F}$ associated with~$\boxup{f}$ has the following form:
each its row and each column contains exactly one element 
of the set $q^n\circup{\mathbb Z}_q$, all other elements are zero. 
It means that there exists a permutation $\vec{\pi}\colon V_n\to V_n$ 
and a function $\varphi\in\func{F}_n$ such that
$$
\boxup{f}(\vec{u},\vec{v})=
\left\{
\begin{array}{rl}
q^n\circup{\varphi}(\vec{u}),& 
\text{if $\vec{v}=\vec{\pi}(\vec{u})$},\\
0 & \text{otherwise}.
\end{array}
\right.
$$
Consequently,
$f(\vec{x},\vec{y})=
\vec{\pi}(\vec{x})\cdot\vec{y}+\varphi(\vec{x})$
and we obtain the well-known Maiorana-McFarland's bent function.
\qed
\end{example}


\section{Affine transformations}\label{BRECTS.ATrans}

Let $\GL_n$ ($\AGL_n$) be the general linear (affine) group
of transformations of~$V_n=\ZZ_q^n$.
We identify~$\GL_n$ with the set of all invertible $n\times n$ 
matrices over~$\ZZ_q$ and denote by~$I_n$ the identity matrix of $\GL_n$.
An element~$\sigma\in\AGL_n$ is specified by a pair $(A,\vec{a})$,
$A\in\GL_n$, $\vec{a}\in V_n$, 
and acts as follows:
$\sigma(\vec{x})=\vec{x}A+\vec{a}$.
Extend the action of~$\AGL_n$ to functions~$f$ with domain~$V_n$ 
in a natural way:
$$
\sigma(f)(\vec{x})=f(\vec{x}A+\vec{a}).
$$

Call functions $f,g\in\func{F}_n$ {\it affine equivalent} 
if there exist $\sigma=(A,\vec{a})\in\AGL_n$ and 
$l\in\func{A}_n$, $l(\vec{x})=\vec{b}\cdot\vec{x}+c$, such that
\begin{equation}\label{Eq.BRECTS.AEquiv}
g(\vec{x})=\sigma(f)(\vec{x})+l(\vec{x})=
f(\vec{x}A+\vec{a})+\vec{b}\cdot\vec{x}+c.
\end{equation}

If~$f$ and~$g$ are affine equivalent, then
\begin{align*}
\hatup{g}(\vec{u})&=
\sum_{\vec{x}\in V_n}
\chi(f(\vec{x}A+\vec{a})+\vec{x}\cdot(\vec{b}-\vec{u})+c)\\
&=\sum_{\vec{y}\in V_n}
\chi(f(\vec{y})+(\vec{y}-\vec{a})A^{-1}\cdot(\vec{b}-\vec{u})+c)\\
&=\sum_{\vec{y}\in V_n}
\chi(f(\vec{y})-(\vec{y}-\vec{a})\cdot(\vec{u}-\vec{b})(A^{-1})^{\rm T}+c)
\end{align*}
and
\begin{equation}\label{Eq.BRECTS.AEquivSpectrum}
\hatup{g}(\vec{u})=\chi(\vec{a}\cdot\vec{v}+c)\hatup{f}(\vec{v}),\quad
\vec{v}=(\vec{u}-\vec{b})(A^{-1})^{\rm T}.
\end{equation}
Therefore, the functions $\hatup{f}$ and $\hatup{g}$ are, in some sense,
also affine equivalent: 
there exist $\sigma^*\in\AGL_n$ and $l^*\in\func{A}_n$ such that
$$
\hatup{g}(\vec{u})=\sigma^*(\hatup{f})(\vec{u})\circup{l}^*(\vec{u}).
$$

It is useful to describe a connection between 
rectangles~$\boxup{f},\boxup{g}\in\boxup{\func{F}}_{m,k}$
of affine equivalent functions.
For example, $g\in\func{B}_n$ if and only if $f\in\func{B}_n$
and a correspondence between $\boxup{f}$ and $\boxup{g}$ can be used
to perform the affine classification of regular bent functions.

Let $n=m+k$, where $m,k\geq 1$.
Divide the vector $\vec{x}\in V_n$ into two parts:
$\vec{x}=(\vec{x}_1,\vec{x}_2)$, 
$\vec{x}_1\in V_m$,
$\vec{x}_2\in V_k$,
and introduce the following {\it elementary transformations} $f\mapsto g$:
\begin{itemize}
\item[A1)]
$g(\vec{x})=f(\vec{x_1}A_1+\vec{a}_1,\vec{x}_2)$,
where $A_1\in\GL_m$, $\vec{a}_1\in V_m$;

\item[A2)]
$g(\vec{x})=f(\vec{x_1},\vec{x}_2 A_2)+\vec{a}_2\cdot\vec{x}_2$,
where $A_2\in\GL_k$, 
$\vec{a}_2\in V_k$;

\item[B1)]
$g(\vec{x})=f(\vec{x})+\vec{b}_1\cdot\vec{x}_1+c$,
where $\vec{b}_1\in V_m$, $c\in\ZZ_q$;

\item[B2)]
$g(\vec{x})=f(\vec{x_1},\vec{x}_2+\vec{b}_2)$,
where $\vec{b}_2\in V_k$;

\item[C1)]
$g(\vec{x})=f(x_1,\ldots,x_{m-1},x_{m}-x_{m+1},x_{m+1},x_{m+2}\ldots,x_{n})$;

\item[C2)]
$g(\vec{x})=f(x_1,\ldots,x_{m-1},x_m,x_{m+1}-x_m,x_{m+2},\ldots,x_{n})$.
\end{itemize}

\begin{proposition}\label{Prop.BRECTS.AEquiv}
Every affine transformation $f\mapsto g$ of the form~\eqref{Eq.BRECTS.AEquiv}
can be realized using only elementary transformations A1~-- C2.
Under these transformations, 
the rectangles $\boxup{f},\boxup{g}\in\boxup{\func{F}}_{m,k}$
are connected in the following manner:
\begin{itemize}
\item[A1)]
$\boxup{g}(\vec{u},\vec{v})=
\boxup{f}(\vec{u}A_1+\vec{a}_1,\vec{v})$,

\item[A2)]
$\boxup{g}(\vec{u},\vec{v})=
\boxup{f}(\vec{u},(\vec{v}-\vec{a}_2)B_2)$, 
where $B_2=(A_2^{-1})^{\rm T}$;

\item[B1)]
$\boxup{g}(\vec{u},\vec{v})=
\chi(\vec{b}_1\cdot\vec{u}+c)\boxup{f}(\vec{u},\vec{v})$;

\item[B2)]
$\boxup{g}(\vec{u},\vec{v})=
\chi(\vec{b}_2\cdot\vec{v})\boxup{f}(\vec{u},\vec{v})$;

\item[C1)]
$\displaystyle\boxup{g}(\vec{u}',u,v,\vec{v}')=
\frac{1}{q}\sum_{x,y\in\ZZ_q}
\boxup{f}(\vec{u}',x,y,\vec{v}')\overline{\chi((u-x)(v-y))}$;

\item[C2)]
$\boxup{g}(\vec{u}',u,v,\vec{v}')=
\boxup{f}(\vec{u}',u,v,\vec{v}')\overline{\chi(uv)}$.
\end{itemize}
Here $\vec{u}\in V_m$, $\vec{v}\in V_k$,
$\vec{u}'\in V_{m-1}$, $\vec{v}'\in V_{k-1}$,
$u,v\in\ZZ_q$.
\end{proposition}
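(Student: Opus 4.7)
The plan is to treat the two parts of the proposition separately. For the decomposition claim, I would begin by splitting the affine transformation~\eqref{Eq.BRECTS.AEquiv} into three independent pieces: the linear map $\vec{x}\mapsto\vec{x}A$, the translation by $\vec{a}$, and the addition of $l(\vec{x})=\vec{b}\cdot\vec{x}+c$. Writing $\vec{a}=(\vec{a}_1,\vec{a}_2)$ and $\vec{b}=(\vec{b}_1,\vec{b}_2)$, the translation splits between A1 (taking $A_1=I_m$) and B2, while the added affine function splits between B1 (absorbing $\vec{b}_1\cdot\vec{x}_1+c$) and A2 (taking $A_2=I_k$, $\vec{a}_2=\vec{b}_2$, absorbing $\vec{b}_2\cdot\vec{x}_2$).

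The core step in part~1 is thus the decomposition of $A\in\GL_n$. The transformations A1 with $\vec{a}_1=\vec{0}$ realize the block-diagonal subgroup $\GL_m\times\{I_k\}$, A2 with $\vec{a}_2=\vec{0}$ realizes $\{I_m\}\times\GL_k$, while C1 and C2 implement the specific cross-block transvections $I_n-E_{m+1,m}$ and $I_n-E_{m,m+1}$. Combining C1 or C2 with permutations and unit scalings from A1, A2 gives every transvection $I_n+cE_{i,j}$ with $i,j$ in opposite blocks, while transvections within a single block already lie in A1 or A2. A standard elementary-matrix reduction over $\ZZ_q$ then shows that these generators cover all of $\GL_n(\ZZ_q)$.

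For part~2, I would verify the formula for each elementary transformation by direct substitution into the rectangle definition~\eqref{Eq.BRECT.RectFDef}. The A1 and B1 cases are immediate; A2 calls for the change of variable $\vec{z}=\vec{y}A_2$ in the inner sum, B2 for $\vec{z}=\vec{y}+\vec{b}_2$, and C2 for $z=y-u$ in the one-variable sum over the $(m{+}1)$st coordinate. These five cases are routine and each reduces to matching the resulting expression with the claimed formula.

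The hard part is C1, where the substitution $x_m\mapsto x_m-x_{m+1}$ mixes a row index with a column index, so the Fourier structure of the rectangle must be used explicitly. I would handle it by first replacing $\circup{f}(\vec{u}',u-y,y,\vec{y}')$ in the sum defining $\boxup{g}$ by its inverse Walsh--Hadamard expansion in the $(y,\vec{y}')$ variables, then interchanging the order of summation, collapsing the $\vec{y}'$ sum via orthogonality of characters (which forces the dummy index $\vec{w}'$ to equal $\vec{v}'$), and finally, after the substitution $x=u-y$ and renaming of the remaining dummy, recognizing the surviving double sum over $(x,w)\in\ZZ_q^2$ as exactly the claimed convolution-type expression.
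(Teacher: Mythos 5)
Your proposal is correct and follows essentially the same route as the paper: the same generating set for $\GL_n$ (the two block-diagonal subgroups from A1, A2 together with the cross-block transvections realized by C1, C2, conjugated by permutations to obtain arbitrary cross-block subtractions, invoking the standard generation of $\GL_n$ over $\ZZ_q$ by elementary matrices), followed by direct verification of the rectangle formulas with a character-orthogonality computation for C1. The only cosmetic difference is that for C1 you expand $\circup{f}$ over the whole column block $(y,\vec{y}')$ and collapse $\vec{y}'$ by orthogonality, whereas the paper first substitutes $x\mapsto u-x$ and Fourier-expands only in the single $(m{+}1)$st coordinate; both yield the claimed identity.
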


\begin{proof}
To prove the first part of the proposition, we will establish 
that~$\GL_n$ is generated by the matrices of its subgroups
$$
G_1=\left\{
\begin{pmatrix}
M_1 & 0\\
0   & I_k
\end{pmatrix}\colon
M_1\in\GL_m
\right\},\quad
G_2=\left\{
\begin{pmatrix}
I_m & 0\\
0   & M_2
\end{pmatrix}\colon
M_2\in\GL_k
\right\}
$$
and additional matrices $R$ and $S$ of the following linear transformations:
\begin{align*}
\vec{x}&\mapsto
\vec{x}R=(x_1,\ldots,x_{m-1},x_{m}-x_{m+1},x_{m+1},x_{m+2},\ldots,x_{n}),\\
\vec{x}&\mapsto
\vec{x}S=(x_1,\ldots,x_{m-1},x_m,x_{m+1}-x_m,x_{m+2},\ldots,x_{n}).
\end{align*}

It is known (see, for example, \cite{BRECTS.Wae67}) 
that the group $\GL_n$ over the Euclidian ring~$\ZZ_m$
is generated by matrices of the following transformations 
of a vector~$\vec{x}$:
\begin{enumerate}
\item[1)]
multiplication of the coordinates~$x_i$ 
by invertible elements of $\ZZ_m$, $1\leq i\leq n$;
\item[2)]
subtraction of $x_j$ from $x_i$, $1\leq i,j\leq n$, $i\neq j$.
\end{enumerate}

The groups $G_1$ and $G_2$ contain matrices which realize
all transformations of the first type 
and transformations of the second type 
for $1\leq i,j\leq m$ and $m+1\leq i,j\leq n$.
We can subtract any different coordinates of $\vec{x}$ 
using additional matrices $R$ and $S$. 
For example, the subtraction of~$x_j$, $m+1\leq j\leq n$, 
from~$x_i$, $1\leq i\leq m$, can be realized by the following steps:
a)~interchange $x_i$ and $x_{m}$ using some matrix of~$G_1$,
b)~interchange $x_{m+1}$ and $x_j$ using some matrix of~$G_2$,
c)~subtract $x_{m+1}$ from $x_m$ using $R$, 
d)~interchange $x_i$ and $x_{m}$, $x_{m+1}$ and $x_j$ again.
Thus, the group generated by $G_1$, $G_2$, $R$, $S$
contains all matrices of the first and the second types and 
this group coincides with~$\GL_n$.

The second part of the proposition is checked by direct calculations.
Consider, for example, the transformation C1.
We have
\begin{align*}
\boxup{g}(\vec{u}',u,v,\vec{v}')
&=\sum_{x\in\ZZ_q}\sum_{\vec{y}'\in V_{k-1}}
\circup{g}(\vec{u}',u,x,\vec{y}')
\overline{\chi(vx+\vec{v}'\cdot\vec{y}')}\\
&=\sum_{x\in\ZZ_q}\sum_{\vec{y}'\in V_{k-1}}
\circup{f}(\vec{u}',u-x,x,\vec{y}')
\overline{\chi(vx+\vec{v}'\cdot\vec{y}')}\\
&=\sum_{x\in\ZZ_q}\sum_{\vec{y}'\in V_{k-1}}
\circup{f}(\vec{u}',x,u-x,\vec{y}')
\overline{\chi(v(u-x)+\vec{v}'\cdot\vec{y}')}\\
&=\frac{1}{q}\sum_{x,y,z\in\ZZ_q}\sum_{\vec{y}'\in V_{k-1}}
\circup{f}(\vec{u}',x,z,\vec{y}')
\overline{\chi((u-x)(v-y)+yz+\vec{v}'\cdot\vec{y}')}\\
&=\frac{1}{q}\sum_{x,y\in\ZZ_q}
\boxup{f}(\vec{u}',x,y,\vec{v}')\overline{\chi((u-x)(v-y))}
\end{align*}
and the required identity holds.
\end{proof}

Therefore, we can realize any affine transformation of 
a rectangle~$\boxup{F}$ by affine permutations of 
its rows (A1) and columns (A2) 
and by multiplying its elements by $q$th roots of unity
(B1, B2, C2). 
The remaining transformation C1 is a single method of changing
the elements of $\boxup{F}$ in magnitude.

\begin{example}[cells]\label{Ex.BRECTS.ATrans}
For the case $q=2$ it is convenient 
to illustrate the transformations~C1, C2 
in the following way.
Divide~$\boxup{F}$ into the {\it cells}, that is, the submatrices
$$
\begin{pmatrix}
\boxup{f}(\vec{u}',0,0,\vec{v}') & \boxup{f}(\vec{u}',0,1,\vec{v}')\\
\boxup{f}(\vec{u}',1,0,\vec{v}') & \boxup{f}(\vec{u}',1,1,\vec{v}')
\end{pmatrix},\quad
\vec{u}'\in V_{m-1},\quad
\vec{v}'\in V_{k-1},
$$
and during the transformations
modify all of the cells simultaneously by the rules
$$
{\rm C1}\colon
\begin{pmatrix}
\alpha&\beta\\
\gamma&\delta
\end{pmatrix}\mapsto
\frac{1}{2}
\begin{pmatrix}
\alpha+\beta+\gamma-\delta &\ \ \ \alpha+\beta-\gamma+\delta\\
\alpha-\beta+\gamma+\delta &\ \ \ -\alpha+\beta+\gamma+\delta\\
\end{pmatrix},\quad
{\rm C2}\colon
\begin{pmatrix}
\alpha&\beta\\
\gamma&\delta
\end{pmatrix}\mapsto
\begin{pmatrix}
\alpha&\beta\\
\gamma&-\delta
\end{pmatrix}.	
$$
\qed
\end{example}


\section{Illustrations}\label{BRECTS.Examples}

In this section we consider the case $q=2$ and
give examples to illustrate the 
usage of bent rectangles for the analysis of some known 
properties of bent functions.
Note that in the Boolean case each bent function is regular
and $\func{B}_n\neq\varnothing$ only for even~$n$.

Start with some useful definitions and facts.
Firstly, 
identify a Boolean function~$f\in\func{F}_n$ of~$\vec{x}=(x_1,\ldots,x_n)$ 
with its algebraic normal form, that is, 
a polynomial of the ring~$\FF_2[x_1,\ldots,x_n]$
reduced modulo the ideal~$(x_1^2-x_1,\ldots,x_n^2-x_n)$.
Denote by $\deg f$ the 
degree of such a polynomial.

Following~\cite{BRECTS.ZheZha99}, 
introduce the set $\func{P}_{n,r}\subseteq\func{F}_n$ 
of plateaued functions of order $r$:
$f\in\func{P}_{n,r}$ if $|\hatup{f}(\vec{u})|\in\{0,2^{n-r/2}\}$ 
for all $\vec{u}\in V_n$ 
(more precisely, $\hatup{f}$ has exactly $2^r$ nonzero values $\pm 2^{n-r/2}$).
It is clear that
$\func{A}_n=\func{P}_{n,0}$,
$\func{B}_n=\func{P}_{n,n}$.

Finally, recall the following result of~\cite{BRECTS.Agi02}.

\begin{lemma}\label{Lemma.BRECTS.Quartet}
Let~$q=2$, $f_1, f_2, f_3, f_4\in\func{F}_n$ and
$$
\hatup{g}(\vec{u})=\frac{1}{2}(\hatup{f}_1(\vec{u})+
\hatup{f}_2(\vec{u})+\hatup{f}_3(\vec{u})+\hatup{f}_4(\vec{u})),\quad
\vec{u}\in V_n.
$$
The function $\hatup{g}\in\hatup{\func{F}}_n$ if and only if
$$
f_1(\vec{x})+f_2(\vec{x})+f_3(\vec{x})+f_4(\vec{x})=1,\quad
\vec{x}\in V_n.
$$
Under this condition,
$g(\vec{x})=f_1(\vec{x})f_2(\vec{x})+
f_1(\vec{x})f_3(\vec{x})+
f_2(\vec{x})f_3(\vec{x})$.
\end{lemma}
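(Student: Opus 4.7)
The plan is to apply the inverse Walsh--Hadamard transform to reduce the statement to a pointwise condition on the functions~$\circup{f}_i$, and then do a finite case analysis.

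First, by linearity of the (inverse) Walsh--Hadamard transform, the hypothesis on $\hatup{g}$ is equivalent to the pointwise identity
$$
h(\vec{x}):=\frac{1}{2}\bigl(\circup{f}_1(\vec{x})+\circup{f}_2(\vec{x})
+\circup{f}_3(\vec{x})+\circup{f}_4(\vec{x})\bigr)
$$
being the image $\circup{g}$ of some~$g\in\func{F}_n$; equivalently, $h(\vec{x})\in\{-1,+1\}$ for every~$\vec{x}$. Since each $\circup{f}_i(\vec{x})=\pm 1$, the sum $\circup{f}_1+\circup{f}_2+\circup{f}_3+\circup{f}_4$ at the point $\vec{x}$ lies in $\{-4,-2,0,2,4\}$, so the $\pm 1$-valuedness of~$h$ forces the sum to equal $\pm 2$. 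That happens precisely when among $f_1(\vec{x}),\ldots,f_4(\vec{x})$ an odd number are equal to~$1$, i.e.\ when $f_1(\vec{x})+f_2(\vec{x})+f_3(\vec{x})+f_4(\vec{x})=1$. This proves the ``if and only if'' part.

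Now assume the identity $f_1+f_2+f_3+f_4=1$, so that $\circup{f}_4=-\circup{f}_1\circup{f}_2\circup{f}_3$, and set $a_i:=\circup{f}_i(\vec{x})=1-2f_i(\vec{x})\in\{-1,+1\}$. It then remains to verify that
$$
h(\vec{x})=\tfrac{1}{2}\bigl(a_1+a_2+a_3-a_1a_2a_3\bigr)
=(-1)^{f_1f_2+f_1f_3+f_2f_3}(\vec{x}).
$$
I would expand $a_1a_2a_3=(1-2f_1)(1-2f_2)(1-2f_3)$ over~$\ZZ$; after cancellation the right-hand side reduces to $1-2s+4f_1f_2f_3$, where $s:=f_1f_2+f_1f_3+f_2f_3$ is computed in~$\ZZ$. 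A trivial check shows that the three achievable values $s\in\{0,1,3\}$ give exactly $(-1)^s$ (the value $s=2$ is impossible, since two of the products being~$1$ forces the third), matching $(-1)^g$ since $g\equiv s\pmod 2$.

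I do not anticipate any serious obstacle; the only place to be careful is the reduction step, where one has to recall that $\hatup{\func{F}}_n$ is exactly the image of $\circup{\func{F}}_n$ under the Walsh--Hadamard transform, so that membership in $\hatup{\func{F}}_n$ translates cleanly into the $\pm 1$-valuedness of~$h$ via the inverse transform.
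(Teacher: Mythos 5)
Your proof is correct. Note that the paper itself does not prove this lemma --- it is recalled from reference [Agi02] without proof --- so there is no in-paper argument to compare against; your route (invert the Walsh--Hadamard transform to reduce membership in $\hatup{\func{F}}_n$ to $\pm1$-valuedness of the average of the $\circup{f}_i$, then a pointwise case check for the majority-function formula) is the natural one and is complete, with the only delicate points (the exclusion of $s=2$ and the need to track $f_1f_2f_3$ alongside $s$ in the final verification) handled correctly.
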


Turn to examples.

\begin{example}[sums of bent functions]\label{Ex.BRECTS.Sum}
It is well-known (see~\cite{BRECTS.Rot76}) that 
if~$f_1\in\func{B}_m$ and~$f_2\in\func{B}_k$,
then $f(\vec{x},\vec{y})=f_1(\vec{x})+f_2(\vec{y})$ is also bent.
Indeed, the rectangle~$\boxup{f}\in\boxup{\func{F}}_{m,k}$ 
has the form
$\boxup{f}(\vec{u},\vec{v})=\circup{f}_1(\vec{u})\hatup{f}_2(\vec{v})$
and obviously satisfies the restrictions on columns.
\qed
\end{example}

\begin{example}[degrees of bent functions]\label{Ex.BRECTS.Deg}
Let~$f\in\func{B}_{2n}$, $n\geq 2$.
In~\cite{BRECTS.Rot76}, Rothaus proved that $\deg f\leq n$.
Give an alternative proof of this fact.

Suppose to the contrary that~$\deg f=k>n$, i.e.
the polynomial $f(x_1,\ldots,x_{2n})$ contains a monomial of degree~$k$.
Without loss of generality, assume that $f$ 
contains the monomial $x_{m+1}x_{m+2}\ldots x_{2n}$, $m=2n-k$.
Then there are an odd number of $1$'s among the values 
$g(\vec{y})=f(\vec{0},\vec{y})$, $\vec{y}\in V_k$,
and~$\hatup{g}(\vec{0})=2r$, where $r$ is odd.
It is impossible when $k=2n$, 
since in this case $\hatup{g}(\vec{0})=\hatup{f}(\vec{0})\in\{\pm 2^n\}$.
Therefore $k<2n$ and consequently $m>0$.
Consider~$\boxup{f}\in\boxup{\func{B}}_{m,k}$.
We have~$\boxup{f}(\vec{0},\vec{0})=\hatup{g}(\vec{0})=2r$
and~$2^{(m-k)/2}\boxup{f}(\vec{0},\vec{0})$ is not even integer.
Since functions of~$\hatup{\func{F}}_m$ take only even values,
the restrictions on columns of~$\boxup{f}$ are not satisfied,
a contradiction.
\qed
\end{example}

\begin{example}[$2$-row bent rectangles]\label{Ex.BRECTS.2Row}
Let $\boxup{f}\in\boxup{\func{B}}_{1,n-1}$.
By definition, the columns of~$\boxup{f}$ belong to the set 
$2^{n/2-1}\func{F}_1$. 
Since $\func{F}_1=\func{A}_1$, 
each column takes exactly one nonzero value $\pm 2^{n/2}$.
It means that all rows of $\boxup{f}$ are in~$\hatup{\func{P}}_{n-1,n-2}$.
This fact was pointed out in~\cite{BRECTS.ZheZha99}.

In Proposition~\ref{Prop.BRECTS.APartConstr} we will describe how to construct
$(2^n-2)|\func{B}_{n-2}|^2$ bent rectangles of~$\boxup{\func{B}}_{1,n-1}$.
For example, such a construction allows to obtain all 896 elements
of $\boxup{\func{B}}_{1,3}$.
\qed
\end{example}

\begin{example}[$4$-row bent rectangles]\label{Ex.BRECTS.4Row}
Consider a rectangle~$\boxup{f}\in\boxup{\func{B}}_{2,n-2}$.
Since every function of $\func{F}_2$ is either affine or bent,
the possible values of $\boxup{f}$ are exhausted by
$0$, $\pm 2^{n/2-1}$, $\pm 2^{n/2}$.

If we restrict to the numbers $\pm 2^{n/2-1}$, 
then each column of $\boxup{f}$ belongs to $2^{n/2-1}\circup{\func{B}}_2$,
takes an odd number of negative values, 
and a product of these values is exactly~$-2^{2n-4}$.
This result was proved in~\cite{BRECTS.PreVan91}.

If we restrict to the numbers $0$, $\pm 2^{n/2}$,
then we obtain a bent rectangle all rows of which 
are in $\hatup{\func{P}}_{n-2,n-4}$.
Proposition~\ref{Prop.BRECTS.APartConstr} will give
$8(2^{n-2}-1)(2^{n-3}-1)(7\cdot 2^{n-3}-13)
|\func{B}_{n-4}|^4$ such rectangles for~$n\geq 4$.
\qed
\end{example}

\begin{example}[Rothaus' construction]\label{Ex.BRECTS.Rothaus}
Let $f_1,f_2,f_3,f_4\in\func{B}_n$ satisfy
$f_1(\vec{y})+f_2(\vec{y})+f_3(\vec{y})+f_4(\vec{y})=0$
for all $\vec{y}\in V_n$. 
In~\cite{BRECTS.Rot76}, Rothaus showed that the function 
\begin{align*}
f(u_1,u_2,\vec{y})&=
f_1(\vec{y})f_2(\vec{y})+
f_1(\vec{y})f_3(\vec{y})+
f_2(\vec{y})f_3(\vec{y})\\
&+
u_1(f_1(\vec{y})+f_2(\vec{y}))+
u_2(f_1(\vec{y})+f_3(\vec{y}))+u_1 u_2
\end{align*}
is bent.
Give another proof of this fact using the rectangle
$\boxup{f}\in\boxup{\func{F}}_{2,n}$.

Lemma~\ref{Lemma.BRECTS.Quartet} implies that $\boxup{f}$ has
the following form:
\begin{equation}\label{Eq.BRECTS.Rothaus}
\boxup{f}(u_1,u_2,\vec{v})=
\frac{1}{2}\left\{
\begin{array}{rl}
\hatup{f}_1(\vec{v})+\hatup{f}_2(\vec{v})+
\hatup{f}_3(\vec{v})-\hatup{f}_4(\vec{v}), & u_1=u_2=0,\\
\hatup{f}_1(\vec{v})-\hatup{f}_2(\vec{v})+
\hatup{f}_3(\vec{v})+\hatup{f}_4(\vec{v}), & u_1=0,\  u_2=1,\\
\hatup{f}_1(\vec{v})+\hatup{f}_2(\vec{v})-
\hatup{f}_3(\vec{v})+\hatup{f}_4(\vec{v}), & u_1=1,\  u_2=0,\\
\hatup{f}_1(\vec{v})-\hatup{f}_2(\vec{v})-
\hatup{f}_3(\vec{v})-\hatup{f}_4(\vec{v}), & u_1=u_2=1.
\end{array}
\right.
\end{equation} 
Choose an arbitrary $\vec{v}$ and construct the function $h\in\func{F}_2$:
$\circup{h}(0,0)=2^{-n/2}\hatup{f}_1(\vec{v})$,
$\circup{h}(0,1)=2^{-n/2}\hatup{f}_2(\vec{v})$,
$\circup{h}(1,0)=2^{-n/2}\hatup{f}_3(\vec{v})$,
$\circup{h}(1,1)=-2^{-n/2}\hatup{f}_4(\vec{v})$.
By~\eqref{Eq.BRECTS.Rothaus}, 
the $\vec{v}$th column of $\boxup{f}$ coincides with $2^{n/2-1}\hatup{h}$.
Hence $\boxup{f}$ satisfies restrictions on columns and $f$ is bent.

Observe that the function $\boxup{f}(u_1,u_2,\vec{y})$ can take all the values 
$0$, $\pm 2^{n/2-1}$, $\pm 2^{n/2}$ and the Rothaus' construction
provides more subtle 4-row rectangles 
than ones considered in the previous example. 
\qed
\end{example}

\begin{example}[Carlet's transformation]\label{Ex.BRECTS.Carlet}
Let $f\in\func{B}_{2n}$,
$E\subset V_{2n}$ be an affine plane of dimension $k\geq n$,
and $\phi_{E}\in\func{F}_{2n}$ be the support of $E$, that is, 
$\phi_{E}(\vec{x})=1$ if only if $\vec{x}\in E$.
In~\cite[p.~94]{BRECTS.Car94}, Carlet obtained conditions
which provide the bentness of the function
$g(\vec{x})=f(\vec{x})+\phi_{E}(\vec{x})$.
Let us prove yet another condition:
$g\in\func{B}_{2n}$ if and only if $f_E\in\func{P}_{k,2(k-n)}$,
where $f_E$ is a restriction of $f$ to $E$, that is,
$f_E(\vec{y})=f(\varphi^{-1}(\vec{y}))$ for some affine bijection
$\varphi\colon E\to V_k$. 

Without loss of generality, 
assume that $E=\{(\vec{0},\vec{a})\colon\vec{a}\in V_k\}$
and $\varphi\colon(\vec{0},\vec{a})\mapsto\vec{a}$.
Consider the rectangles $\boxup{f}\in\boxup{\func{B}}_{2n-k,k}$ and
$\boxup{g}\in\boxup{\func{F}}_{2n-k,k}$.
They differ only in the first row:
$$
\boxup{f}(\vec{0},\vec{v})=\hatup{f}_E(\vec{v}),\quad
\boxup{g}(\vec{0},\vec{v})=-\hatup{f}_E(\vec{v}).
$$
Let $\hatup{f}_E(\vec{v})\neq 0$ for some~$\vec{v}$.
Consider the normalized columns
$\hatup{h}(\vec{u})=2^{n-k}\boxup{f}(\vec{u},\vec{v})$ and
$\hatup{h}'(\vec{u})=2^{n-k}\boxup{g}(\vec{u},\vec{v})$.
The functions $\hatup{h}\in\hatup{\func{F}}_{2n-k}$ and $\hatup{h}'$ differ
only in signs of their (nonzero) values at $\vec{u}=\vec{0}$.
Therefore, $\hatup{h}'\in\hatup{\func{F}}_{2n-k}$ 
if and only if $\hatup{h}(\vec{0})=\pm 2^{2n-k}$.
Under this condition,
$$                                 
\hatup{f}_E(\vec{v})=
\boxup{f}(\vec{0},\vec{v})=
2^{k-n}\hatup{h}(\vec{0})=\pm 2^n.
$$
Consequently, $\boxup{g}$ is bent if and only if
$\hatup{f}_E$ takes only the values $0$, $\pm 2^n$, 
that is, $f_E\in\func{P}_{k,2(k-n)}$.
\qed
\end{example}

\begin{example}[normal bent functions]
Following~\cite{BRECTS.Dob95}, call a function~$f\in\func{B}_{2n}$ 
(weakly) normal if its restriction to some $n$-dimesional affine plane
is affine.
In other words, $f$ is normal if there exists an affine equivalent 
function $g$ such that
$$
M(g)=\max_{\vec{u},\vec{v}\in V_n}|\boxup{g}(\vec{u},\vec{v})|=2^n,\quad
\boxup{g}\in\boxup{\func{B}}_{n,n}.
$$

Using the affine classification of bent functions of $6$ variables 
(see~\cite{BRECTS.Rot76}), one can check that every $f\in\func{B}_6$ is normal.
We give a direct proof of this fact by applying the results 
of Section~\ref{BRECTS.ATrans}.

Write $\varphi\sim\alpha_1^{d_1}\alpha_2^{d_2}\ldots$ for a function $\varphi$
that takes $d_1$ values $\alpha_1$, $d_2$ values $\alpha_2$, and so on.
If $\hatup{h}\in\hatup{\func{F}}_3$, then
$|\hatup{h}|\sim 8^1 0^7$ or $|\hatup{h}|\sim 6^1 2^7$ or
$|\hatup{h}|\sim 4^4 0^4$.
Since all rows and columns of $\boxup{f}\in\boxup{\func{B}}_{3,3}$ 
are in $\hatup{\func{F}}_3$, we have $M(f)\in\{4,6,8\}$.

Let $M(f)=4$. 
Applying the elementary transformations A1, A2 
(i.e. permuting rows and columns) 
and B1, B2 (i.e. changing signs of rows and columns), 
we can arrange elements of~$\boxup{f}$ into one of the following cells:
$$
\begin{pmatrix}
4&4\\
4&0\\
\end{pmatrix},\quad
\begin{pmatrix}
4&4\\
4&-4\\
\end{pmatrix},\quad
\begin{pmatrix}
4&4\\
4&4\\
\end{pmatrix}.
$$
Then apply C1, C2:
$$
\begin{pmatrix}
4&4\\
4&0\\
\end{pmatrix}\stackrel{\rm C1}{\to}
\begin{pmatrix}
6&2\\
2&2\\
\end{pmatrix},\quad
\begin{pmatrix}
4&4\\
4&-4\\
\end{pmatrix}\stackrel{\rm C1}{\to}
\begin{pmatrix}
8&0\\
0&0\\
\end{pmatrix},\quad
\begin{pmatrix}
4&4\\
4&4\\
\end{pmatrix}\stackrel{\rm C2}{\to}
\begin{pmatrix}
4&4\\
4&-4\\
\end{pmatrix}\stackrel{\rm C1}{\to}
\begin{pmatrix}
8&0\\
0&0\\
\end{pmatrix},
$$
and obtain a rectangle that takes the value $\geq 6$.
So it is sufficient to treat the case $M(f)=6$ only.
In this case we proceed in a similar manner:
$$
\xrightarrow{{\rm A1}, {\rm A2}, {\rm B1}, {\rm B2}, {\rm C2}}
\begin{pmatrix}
6&2\\
2&-6\\
\end{pmatrix}\stackrel{\rm C1}{\to}
\begin{pmatrix}
8&0\\
0&-4\\
\end{pmatrix},
$$
and obtain a rectangle that takes the value $8$,
which is what we wanted.
\qed
\end{example}

\begin{example}[the number of bent functions]
If $g\in\func{P}_{n,2}$, that is, $\hatup{g}(\vec{a}_i)=2^{n-1}\chi(b_i)$ 
for some $b_i\in\FF_2$ and distinct~$\vec{a}_i\in V_n$, $i=1,2,3,4$,
then by Lemma~\ref{Lemma.BRECTS.Quartet}
\begin{itemize}
\item[(i)]
$\vec{a}_1+\vec{a}_2+\vec{a}_3+\vec{a}_4=\vec{0}$,
\item[(ii)]
$b_1+b_2+b_3+b_4=1$.
\end{itemize}

In~\cite{BRECTS.Agi02}, we proposed an algorithm to construct bent squares
$\boxup{f}\in\boxup{\func{B}}_{n,n}$ all rows and columns of which
are elements of $\hatup{\func{A}}_n\cup\hatup{\func{P}}_{n,2}$.
The algorithm arranges nonzero elements in the matrix
$\boxup{F}$ subject to the condition~(i) 
and then places signs of elements subject to~(ii).

Counting different outputs of the algorithm, 
we obtain constructive lower bounds for~$|\boxup{\func{B}}_{n,n}|$.
In particular, the algorithm provides 
$1559994535674013286400> 2^{70.4}$ distinct bent squares of
$\boxup{\func{B}}_{4,4}$ and, consequently, $|\func{B}_8|> 2^{70.4}$.
\qed
\end{example}


\section{Biaffine and bilinear bent squares}\label{BRECTS.Bi}

Consider a mapping $\pi\colon V_n\times V_n\to V_n$.
As previously, for a fixed $\vec{v}$ call the mapping 
$\vec{u}\mapsto\pi(\vec{u},\vec{v})$ a restriction of $\pi$ to $\vec{u}$,
and for a fixed $\vec{u}$ call the mapping 
$\vec{v}\mapsto\pi(\vec{u},\vec{v})$ a restriction to $\vec{v}$.
Say that $\pi$ is {\it biaffine} ({\it bilinear})
if all its restrictions to $\vec{u}$ and $\vec{v}$ 
are affine (linear) transformations of~$V_n$.
A biaffine mapping is {\it nonsingular}
if all its restrictions are invertible.
A bilinear mapping is {\it nonsingular} if all its restrictions 
to $\vec{u}$ for $\vec{v}\neq\vec{0}$ and
to $\vec{v}$ for $\vec{u}\neq\vec{0}$ are invertible.

Choose an arbitrary function $g\in\func{F}_n$ 
and construct a bent square $\boxup{f}\in\boxup{\func{B}}_{n,n}$
such that the corresponding matrix $\boxup{F}$
consists almost only of the values of~$\hatup{g}$ 
permuted in some order and multiplied by elements of~$\circup{\ZZ}_q$.
Informally speaking, we ``scatter'' values of $\hatup{g}$ 
over~$\boxup{F}$.
The connection~\eqref{Eq.BRECTS.AEquivSpectrum} 
between the Walsh--Hadamard coefficients of affine equivalent functions
allows us to make such a scattering using a nonsingular 
biaffine or bilinear mapping.
The obtained constructions are given by the following two 
easily verified propositions.

\begin{proposition}[biaffine bent squares]\label{Prop.BRECTS.Biaffine}
Let $\pi$ be a nonsingular biaffine mapping $V_n\times V_n\to V_n$,
$g\in\func{F}_n$, $\varphi\in\func{F}_{2n}$, 
and all restrictions of $\varphi(\vec{u},\vec{v})$ 
to $\vec{u}\in V_n$ and $\vec{v}\in V_n$ are affine functions.
Then the square
$$
\boxup{f}(\vec{u},\vec{v})=
\circup{\varphi}(\vec{u},\vec{v})
\hatup{g}(\pi(\vec{u},\vec{v}))
$$
is bent.
\end{proposition}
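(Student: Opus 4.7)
The plan is to show $\boxup{f}$ is a bent square directly; Proposition~\ref{Prop.BRECTS.Idea} then guarantees the underlying $f \in \func{F}_{2n}$ is regular bent. Since $m = k = n$, the normalization factor $q^{(m-k)/2}$ equals $1$, so I must verify that every row and every column of $\boxup{f}$ lies in $\hatup{\func{F}}_n$.

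Fix $\vec{u}$ and analyze the row $\vec{v} \mapsto \boxup{f}(\vec{u}, \vec{v})$. By the biaffine and nonsingularity hypotheses there exist $A(\vec{u}) \in \GL_n$ and $\vec{p}(\vec{u}) \in V_n$ with $\pi(\vec{u}, \vec{v}) = \vec{v} A(\vec{u}) + \vec{p}(\vec{u})$; by the affineness of the restrictions of $\varphi$ there exist $\vec{c}(\vec{u}) \in V_n$ and $d(\vec{u}) \in \ZZ_q$ with $\varphi(\vec{u}, \vec{v}) = \vec{c}(\vec{u}) \cdot \vec{v} + d(\vec{u})$. Consequently
$$
\boxup{f}(\vec{u}, \vec{v}) = \chi\bigl(\vec{c}(\vec{u}) \cdot \vec{v} + d(\vec{u})\bigr) \, \hatup{g}\bigl(\vec{v} A(\vec{u}) + \vec{p}(\vec{u})\bigr),
$$
which is precisely the shape of the right-hand side of~\eqref{Eq.BRECTS.AEquivSpectrum} read in the variable $\vec{v}$. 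Matching parameters exhibits a function $h_{\vec{u}} \in \func{F}_n$ affine equivalent to $g$ whose Walsh--Hadamard transform coincides with this row; hence every row of $\boxup{f}$ lies in $\hatup{\func{F}}_n$.

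The hypotheses on $\pi$ and $\varphi$ are symmetric in $\vec{u}$ and $\vec{v}$, so the identical argument with the variables interchanged shows that every column of $\boxup{f}$ also lies in $\hatup{\func{F}}_n$. This verifies both bent-rectangle conditions, and Proposition~\ref{Prop.BRECTS.Idea} finishes the proof. The only calculation requiring care is matching the parameters in~\eqref{Eq.BRECTS.AEquivSpectrum}, which amounts to inverting the transpose of $A(\vec{u})$ and solving a short linear system for the affine translate; I expect no essential obstacle there, as the nonsingularity hypothesis is precisely what makes the inversion legitimate.
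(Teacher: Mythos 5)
Your argument is correct and is precisely the verification the paper has in mind: the paper offers no written proof (calling the proposition ``easily verified''), but explicitly points to the relation~\eqref{Eq.BRECTS.AEquivSpectrum} as the tool for showing each row and each column of $\boxup{f}$ is the Walsh--Hadamard transform of a function affine equivalent to $g$, which is exactly your row/column analysis followed by Proposition~\ref{Prop.BRECTS.Idea}. The parameter matching you defer does go through, since it only requires inverting $A(\vec{u})^{\rm T}$, guaranteed by nonsingularity.
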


\begin{proposition}[bilinear bent squares]\label{Prop.BRECTS.Bilinear}
Let $\pi$ be a nonsingular bilinear mapping $V_n\times V_n\to V_n$,
$g\in\func{F}_n$, 
$\varphi$ be defined as in the previous proposition,
and $h,h'\in\func{F}_n$ be such that
$$
\hatup{h}(\vec{0})=\hatup{h}'(\vec{0}),\quad
\hatup{h}(\vec{u})=\circup{\varphi}(\vec{u},\vec{0})\hatup{g}(\vec{0}),\quad
\hatup{h}'(\vec{v})=\circup{\varphi}(\vec{0},\vec{v})\hatup{g}(\vec{0}),\quad
\vec{u},\vec{v}\in V_n\setminus\{\vec{0}\}.
$$
Then the square
$$
\boxup{f}(\vec{u},\vec{v})=\left\{
\begin{array}{rl}
\hatup{h}(\vec{0}), & \vec{u}=\vec{v}=\vec{0},\\
\circup{\varphi}(\vec{u},\vec{v})\hatup{g}(\pi(\vec{u},\vec{v})) & 
\text{otherwise},
\end{array}
\right.
$$
is bent.
\end{proposition}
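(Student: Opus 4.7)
The plan is to verify directly that every row and every column of the matrix $\boxup{f}$ lies in $\hatup{\func{F}}_n$; once this is done, $\boxup{f}$ is a rectangle of some $f \in \func{F}_{2n}$ and, with the scaling factor $q^{(m-k)/2} = 1$ for $m = k = n$, it is a bent rectangle. Proposition~\ref{Prop.BRECTS.Idea} will then give $f \in \func{B}_{2n}$. Because the construction is symmetric under the exchange of $(\vec{u}, \vec{v}, h)$ with $(\vec{v}, \vec{u}, h')$, it is enough to analyse rows; columns go through identically.

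First I would treat a generic row at fixed $\vec{u} \neq \vec{0}$. Nonsingular bilinearity gives $\pi(\vec{u}, \vec{v}) = \vec{v} A_{\vec{u}}$ with $A_{\vec{u}} \in \GL_n$, and the hypothesis on $\varphi$ gives $\varphi(\vec{u}, \vec{v}) = \vec{b}_{\vec{u}} \cdot \vec{v} + c_{\vec{u}}$. Since $\pi(\vec{u}, \vec{0}) = \vec{0}$, the generic formula extends consistently to $\vec{v} = \vec{0}$, where it produces exactly $\circup{\varphi}(\vec{u}, \vec{0})\hatup{g}(\vec{0}) = \hatup{h}(\vec{u})$. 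Hence for every $\vec{v} \in V_n$,
$$
\boxup{f}(\vec{u}, \vec{v}) = \chi(\vec{b}_{\vec{u}} \cdot \vec{v} + c_{\vec{u}})\,\hatup{g}(\vec{v} A_{\vec{u}}),
$$
which by \eqref{Eq.BRECTS.AEquivSpectrum} equals $\hatup{g}_{\vec{u}}(\vec{v})$ for a function $g_{\vec{u}} \in \func{F}_n$ affine equivalent to $g$. Thus the row lies in $\hatup{\func{F}}_n$.

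Next I would handle the exceptional row at $\vec{u} = \vec{0}$. Here $\pi(\vec{0}, \vec{v}) = \vec{0}$, so the generic formula collapses to $\circup{\varphi}(\vec{0}, \vec{v})\hatup{g}(\vec{0}) = \hatup{h}'(\vec{v})$ for every $\vec{v} \neq \vec{0}$, while the entry at $\vec{v} = \vec{0}$ is $\hatup{h}(\vec{0}) = \hatup{h}'(\vec{0})$ by the compatibility hypothesis. The entire first row thus equals $\hatup{h}' \in \hatup{\func{F}}_n$. The symmetric argument shows that the first column equals $\hatup{h}$ and that all remaining columns are Walsh--Hadamard transforms of affine equivalents of $g$.

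The main obstacle is entirely bookkeeping at the degenerate ``cross'' $\vec{u} = \vec{0}$ or $\vec{v} = \vec{0}$. There the rank of the relevant restriction of $\pi$ drops to zero and the biaffine argument of Proposition~\ref{Prop.BRECTS.Biaffine} no longer applies. The auxiliary functions $h$ and $h'$ are introduced precisely to prescribe the values along this cross, and the conditions placed on their spectra in the statement are exactly those needed to glue the boundary row and column with the generic formula into bona fide elements of $\hatup{\func{F}}_n$; the common value $\hatup{h}(\vec{0}) = \hatup{h}'(\vec{0})$ ensures consistency at the shared corner $\boxup{f}(\vec{0}, \vec{0})$.
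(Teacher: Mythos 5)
Your verification is correct and is precisely the argument the paper intends: the paper gives no written proof, calling the proposition ``easily verified'' and pointing to the spectrum relation~\eqref{Eq.BRECTS.AEquivSpectrum} for affine equivalent functions as the key tool. Your use of that relation for the generic rows and columns (where nonsingular bilinearity gives $\pi(\vec{u},\vec{v})=\vec{v}A_{\vec{u}}$ with $A_{\vec{u}}\in\GL_n$), together with the observation that the $\vec{u}=\vec{0}$ row and $\vec{v}=\vec{0}$ column collapse to $\hatup{h}'$ and $\hatup{h}$ respectively and agree at the corner, fills in exactly the omitted details, so no comparison of approaches is needed.
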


Note that any function of the form
$$ 
\varphi(u_1,\ldots,u_n,v_1,\ldots,v_n)=
\sum_{i,j=1}^n\alpha_{ij} u_i v_j+
\sum_{i=1}^n\beta_i u_i+
\sum_{i=1}^n\gamma_i v_i+\delta,\ \ 
\alpha_{ij},\beta_i,\gamma_j,\delta\in\ZZ_q,
$$
satisfies the condition of the above propositions.
Note also that in Proposition~\ref{Prop.BRECTS.Bilinear}
we can easily construct $h$ and $h'$ if $\hatup{g}(\vec{0})=0$. 
Indeed, in this case it is sufficiently to choose $h=h'\equiv c$, $c\in\ZZ_q$.

In Proposition~\ref{Prop.BRECTS.Biaffine}
we can use the following nonsingular biaffine mapping
$$ 
\pi(\vec{u},\vec{v})=\vec{u}A+\vec{v}B+
(\vec{u}C_1\vec{v}^{\rm T},\ldots,\vec{u}C_n\vec{v}^{\rm T})+
\vec{d},
$$
where $\vec{d}\in V_n$ and $A$, $B$, $C_1,\ldots,C_n$ are $n\times n$
matrices over $\ZZ_q$ such that 
$A+(C_1\vec{v}^{\rm T},\ldots,C_n\vec{v}^{\rm T})$ and
$B+(C_1^{\rm T}\vec{u}^{\rm T},\ldots,C_n^{\rm T}\vec{u}^{\rm T})$ 
are invertible for every $\vec{v}$ and $\vec{u}$.

Further, each nonsingular bilinear mapping $\pi$ has the form
\begin{equation}\label{Eq.BRECTS.Bilinear}
\pi(\vec{u},\vec{v})=\vec{u}A_{\vec{v}},
\end{equation}
where $A_{\vec{v}}$ are $n\times n$ matrices over $\ZZ_q$ such that
\begin{itemize}
\item[(i)]
$A_{\vec{0}}=0$ and
$A_{\vec{v}}\in\GL_n$ for all nonzero $\vec{v}\in V_n$;

\item[(ii)]
$A_{\vec{v}+\vec{v}'}=A_{\vec{v}}+A_{\vec{v}'}$
for all $\vec{v},\vec{v}'\in V_n$.
\end{itemize}

Indeed, considering restrictions of $\pi$ to $\vec{u}$,
we get~\eqref{Eq.BRECTS.Bilinear}.
Since each such restriction for $\vec{v}\neq\vec{0}$ must be invertible, 
$A_{\vec{v}}\in\GL_n$ for all nonzero $\vec{v}$.
For each fixed $\vec{u}$ the mapping
$\vec{v}\mapsto\vec{u}A_{\vec{v}}$ must be linear.
Therefore,
$\vec{u}A_{\vec{v}+\vec{v'}}=\vec{u}(A_{\vec{v}}+A_{\vec{v}'})$
for all $\vec{v}$ and $\vec{v}'$,
which yields~(ii) and the first part of~(i).

Assume further that $q$ is prime.
The arising structure~$R=\{A_{\vec{v}}\colon\vec{v}\in V_n\}$
is connected with some concepts of projective geometry.
Consider the following linear subspaces of $V_{2n}$:
\begin{equation}\label{Eq.BRECTS.Spread}
E_{\infty}=\{(\vec{0},\vec{v})\colon \vec{v}\in V_n\},\quad
E_{\vec{v}}=\{(\vec{u},\vec{u}A_{\vec{v}})\colon
\vec{u}\in V_n\},\quad\vec{v}\in V_n.
\end{equation}
Each such subspace is of dimension $n$,
two different subspaces intersect only in the zero vector
and, consequently, the union of subspaces is~$V_{2n}$.
A set of subspaces with these properties is called
a {\it spread} of $V_{2n}$.

Observe that the condition~(ii) 
is too strong for the set~\eqref{Eq.BRECTS.Spread} to be a spread.
We need only that 
$A-A'\in\GL_n$ for all distinct $A,A'\in R$
(with the additional property $I_n\in R$ such a set $R$
is called a quasifield).

Spreads are used in the following 
well-known construction of bent functions.

\begin{example}[Dillon's construction]
Consider a spread~\eqref{Eq.BRECTS.Spread} determined
by a set~$R=\{A_{\vec{v}}\colon\vec{v}\in V_n\}$.
Choose $c\in\FF_q$ and $g\in\func{F}_n$ such that $\hatup{g}(\vec{0})=0$.
In~\cite{BRECTS.Dil72}, Dillon actually proved 
that the function $f\in\func{F}_{2n}$,
$$
f(\vec{x})=\left\{
\begin{array}{rl}
c,          & \vec{x}\in E_{\infty},\\
g(\vec{v}), & \vec{x}\in E_\vec{v}\setminus\{\vec{0}\},
\end{array}
\right.
$$
is regular bent.

The corresponding bent square~$\boxup{f}\in\boxup{\func{B}}_{n,n}$
has the form
$$
\boxup{f}(\vec{u},\vec{v})=
\left\{
\begin{array}{rl}
q^n\chi(c), & \vec{u}=\vec{v}=\vec{0},\\
\displaystyle
\sum_{\vec{y}\in V_n}
\circup{g}(\vec{y})\overline{\chi(\vec{u}A_{\vec{y}}\cdot\vec{v})} &
\text{otherwise}.
\end{array}
\right.
$$
We see that the Dillon's bent squares are similar to the bilinear ones.
In particular, if elements of~$R$ satisfy the conditions~(i) and~(ii),
then the Dillon's construction is covered by 
Proposition~\ref{Prop.BRECTS.Biaffine}
under the choice $h=h'\equiv c$, $\varphi\equiv 0$,
$\pi(\vec{u},\vec{v})=\vec{v}B_{\vec{u}}^{\rm T}$,
where the matrices $B_{\vec{u}}$ are such that
$\vec{u}A_{\vec{y}}=\vec{y}B_{\vec{u}}$ for all~$\vec{u},\vec{y}\in V_n$.
\qed
\end{example}

Remark that the Maiorana--McFarland's bent squares can be represented
in the form of Proposition~\ref{Prop.BRECTS.Bilinear} 
with only the refinement that $g$ is necessarily affine.
The simple form of $\hatup{g}$ in this case allows
to relax the conditions of the proposition:
we can use an arbitrary $\varphi$ and choose $\pi(\vec{u},\vec{v})$ 
such that all its restrictions to $\vec{u}$ and $\vec{v}$
are bijections (possibly not affine).


\section{Partitions into affine planes}\label{BRECTS.APart}

Let~$q$ be prime and $L$ be a linear subspace of~$V_n=\FF_q^n$ 
of dimension~$r$. Write the latter as $L<V_n$, $\dim L=r$.
Recall that the number of distinct $r$-dimensional 
subspaces of~$V_n$ is given by the Gaussian coefficient
$$
\GaussCoeff{n}{r}{q}=
\frac{(q^n-1)(q^{n-1}-1)\ldots(q^{n-r+1}-1)}
{(q^r-1)(q^{r-1}-1)\ldots(q-1)},\quad
0\leq r\leq n.
$$

Let~$E=L+\vec{b}$ be an affine plane
obtained by a shift of~$L$ by a vector~$\vec{b}\in V_n$.
The plane~$E$ is the image of the affine mapping~$\pi\colon V_r\to V_n$,
$\vec{w}\mapsto\vec{w}A+\vec{b}$,
under a suitable choice of the $r\times n$ matrix $A$ over~$\FF_q$ of rank~$r$.

Given~$g\in\func{F}_r$, construct the function $h\in\func{F}_n$,
\begin{equation}\label{Eq.BRECTS.StretchFunc}
h(\vec{x})=
g(\vec{x}A^{\rm T})+\vec{b}\cdot\vec{x},\quad
\vec{x}\in V_n.
\end{equation}
Call the transformation $g\mapsto h$ a {\it stretching} of~$g$ to
the plane~$E$.
Under the stretching
\begin{align*}
\hatup{h}(\vec{v})&=
\sum_{\vec{x}\in V_n}\circup{g}(\vec{x}A^{\rm T})
\chi((\vec{b}-\vec{v})\cdot\vec{x})\\
&=q^{-r}\sum_{\vec{x}\in V_n}\sum_{\vec{w}\in V_r}
\hatup{g}(\vec{w})
\chi(\vec{x}A^{\rm T}\cdot\vec{w}+(\vec{b}-\vec{v})\cdot\vec{x})\\
&=q^{-r}\sum_{\vec{w}\in V_r}\hatup{g}(\vec{w})
\sum_{\vec{x}\in V_n}\chi((\pi(\vec{w})-\vec{v})\cdot\vec{x})
\end{align*}
and, consequently,
\begin{equation}\label{Eq.BRECTS.StretchSpectrum}
\hatup{h}(\vec{v})=\left\{
\begin{array}{rl}
q^{n-r}\hatup{g}(\pi^{-1}(\vec{v})), & \vec{v}\in E,\\
0 & \text{otherwise}.
\end{array}
\right.
\end{equation}

Describe bent rectangles $\boxup{f}\in\boxup{\func{B}}_{m,n}$, $m=n-r$,
which have all rows of the form~\eqref{Eq.BRECTS.StretchSpectrum}
and all columns in the set~$q^{r/2}\hatup{\func{A}}_m$.
Such bent rectangles were first introduced by 
Carlet~\cite{BRECTS.Car04} for the case $q=2$.
Note that after the transposition of $\boxup{f}$ we obtain
the rectangle~$\boxup{f}^*\in\boxup{\func{B}}_{n,m}$
that corresponds to the so called (see~\cite{BRECTS.LogSalYas00})
{\it partial affine} bent function $f^*\in\func{B}_{n+m}$:
each restriction of $f^*(\vec{x},\vec{y})$ to~$\vec{x}\in V_n$ is affine.

\begin{proposition}[partitions into affine planes]
\label{Prop.BRECTS.APartConstr}
Let $m$, $r$ be nonnegative integers, $n=m+r$,
$\vec{u}\in V_m$, $\vec{v}\in V_n$, and
$$
\boxup{f}(\vec{u},\vec{v})=\left\{
\begin{array}{rl}
q^m \hatup{g}_{\vec{u}}(\pi_{\vec{u}}^{-1}(\vec{v})), &
\vec{v}\in E_{\vec{u}},\\
0 & \text{otherwise},
\end{array}
\right.
$$
where $g_{\vec{u}}\in\func{B}_r$ and~$\pi_{\vec{u}}$ are 
mappings~$V_r\to V_n$ such that $E_{\vec{u}}=\pi_{\vec{u}}(V_r)$
are affine planes of dimension~$r$.
If the planes~$\{E_{\vec{u}}\}$ are disjoint
and therefore determine a partition of~$V_n$, 
then~$\boxup{f}\in{\boxup{\func{B}}}_{m,n}$.
\end{proposition}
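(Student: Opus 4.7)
The plan is to verify the two defining conditions of a bent rectangle directly from the hypotheses, leveraging the spectral formula~\eqref{Eq.BRECTS.StretchSpectrum} for the stretching of a function to an affine plane.

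First I would analyze the rows. For each fixed $\vec{u}\in V_m$, let $h_{\vec{u}}$ denote the stretching of $g_{\vec{u}}$ to the plane $E_{\vec{u}}=\pi_{\vec{u}}(V_r)$, as in~\eqref{Eq.BRECTS.StretchFunc}. Since $n-r=m$, equation~\eqref{Eq.BRECTS.StretchSpectrum} gives exactly $\hatup{h}_{\vec{u}}(\vec{v})=q^{m}\hatup{g}_{\vec{u}}(\pi_{\vec{u}}^{-1}(\vec{v}))$ for $\vec{v}\in E_{\vec{u}}$ and $0$ otherwise. Thus the $\vec{u}$th row of $\boxup{f}$ coincides with $\hatup{h}_{\vec{u}}$, which lies in $\hatup{\func{F}}_n$, establishing the row condition.

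Next I would analyze the columns, where the partition hypothesis does the essential work. Fix $\vec{v}\in V_n$. Because the planes $\{E_{\vec{u}}\}_{\vec{u}\in V_m}$ partition $V_n$, there is exactly one index $\vec{u}_0=\vec{u}_0(\vec{v})$ with $\vec{v}\in E_{\vec{u}_0}$. Consequently the column $\vec{u}\mapsto\boxup{f}(\vec{u},\vec{v})$ is supported on the single point $\vec{u}_0$, where its value is $q^m\hatup{g}_{\vec{u}_0}(\pi_{\vec{u}_0}^{-1}(\vec{v}))$. Since $g_{\vec{u}_0}\in\func{B}_r$, this value is an element of $q^m\cdot q^{r/2}\circup{\ZZ}_q=q^{(n+m)/2}\circup{\ZZ}_q$. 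Multiplying the column by $q^{(m-n)/2}=q^{-r/2}$ yields a function on $V_m$ that takes the value $q^m\chi(c)$ at $\vec{u}_0$ for some $c\in\ZZ_q$ and is zero elsewhere. By the explicit formula for $\hatup{l}$ given for affine $l$, this scaled column equals $\hatup{l}$ for the affine function $l(\vec{x})=\vec{u}_0\cdot\vec{x}+c$, and in particular belongs to $\hatup{\func{A}}_m\subseteq\hatup{\func{F}}_m$.

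Both conditions of the definition of a bent rectangle are therefore satisfied, so $\boxup{f}\in\boxup{\func{B}}_{m,n}$ (and by Proposition~\ref{Prop.BRECTS.Idea} the underlying function is regular bent). There is no deep obstacle here; the only thing that requires care is the bookkeeping of the exponents of $q$, i.e.\ confirming that the factor $q^m$ in the definition of $\boxup{f}$ combines with the regular-bentness of $g_{\vec{u}_0}$ and the normalization $q^{(m-n)/2}$ to produce precisely the magnitude $q^m$ characteristic of the Walsh--Hadamard transform of an affine function on $V_m$.
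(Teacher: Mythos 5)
Your proof is correct and follows essentially the route the paper intends: the paper states this proposition without a displayed proof, but the surrounding text sets it up exactly as you argue --- each row is the spectrum~\eqref{Eq.BRECTS.StretchSpectrum} of a stretching of $g_{\vec{u}}$ to $E_{\vec{u}}$, and the partition hypothesis forces each column, after the $q^{(m-n)/2}$ normalization, to be the spectrum of an affine function on $V_m$, i.e.\ an element of $q^{r/2}\hatup{\func{A}}_m$. Your exponent bookkeeping checks out, so nothing further is needed.
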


\begin{example}\label{Ex.BRECTS.APartConstr}
Let $q=2$.
Choose the following partition of~$V_4$ 
into the planes of dimension~$2$:
\begin{align*}
E_{(0,0)}&=\{(0,0,0,0), (0,0,0,1), (0,0,1,0), (0,0,1,1)\},\\
E_{(0,1)}&=\{(0,1,0,0), (0,1,0,1), (1,1,0,0), (1,1,0,1)\},\\
E_{(1,0)}&=\{(0,1,1,0), (0,1,1,1), (1,0,0,0), (1,0,0,1)\},\\
E_{(1,1)}&=\{(1,0,1,0), (1,0,1,1), (1,1,1,0), (1,1,1,1)\}.
\end{align*}
Using this partition, construct
a rectangle $\boxup{f}\in\boxup{\func{B}}_{2,4}$ 
with the matrix
$$
\boxup{F}=
\left(
\begin{array}{cccccccccccccccc}
\pm 8 & \pm 8 & \pm 8 & \pm 8 & 0 & 0 & 0 & 0 & 0 & 0 & 0 & 0 & 0 & 0 & 0 & 0\\
0 & 0 & 0 & 0 & \pm 8 & \pm 8 & 0 & 0 & 0 & 0 & 0 & 0 & \pm 8 & \pm 8 & 0 & 0\\
0 & 0 & 0 & 0 & 0 & 0 & \pm 8 & \pm 8 & \pm 8 & \pm 8 & 0 & 0 & 0 & 0 & 0 & 0\\
0 & 0 & 0 & 0 & 0 & 0 & 0 & 0 & 0 & 0 & \pm 8 & \pm 8 & 0 & 0 & \pm 8 & \pm 8
\end{array}\right),
$$
where signs of the elements are determined by unspecified
bent functions~$g_{\vec{u}}\in\func{B}_2$.
\qed
\end{example}

It is convenient to assume that every one-element subset 
of a vector space is an affine plane of dimension~$0$ 
and that~$\func{B}_0=\func{F}_0$. 
Then under~$r=0$ the above proposition 
gives us all Maiorana--McFarland's bent squares.

In general, Proposition~\ref{Prop.BRECTS.APartConstr} 
allows to construct
$$
(q^m)! c_q(n,m)|\func{B}_{n-m}|^{q^m}
$$
distinct regular bent functions of~$n+m$ variables.
Here $c_q(n,m)$ is the number of distinct 
partitions of~$V_n=\FF_q^n$ into $q^m$ affine planes of dimension~$n-m$. 

To obtain estimates for~$c_q(n,m)$ 
consider some partition $\{E_1,E_2,\ldots,E_{q^m}\}$ counted by $c_q(n,m)$.
Let~$E_i=L_i+\vec{b}_i$, where $L_i<V_n$, $\vec{b}_i\in V_n$,
and
$$
W=L_1\cap L_2\cap\ldots\cap L_{q^m}.
$$
Call the partition~$\{E_i\}$ {\it primitive} if~$W=\{\vec{0}\}$.
Denote by $c_q^*(n,m)$ the number of distinct 
primitive partitions of $V_n$ into planes of dimension~$n-m$.

Suppose that~$\{E_i\}$ is not primitive, that is, $d=\dim W\geq 1$.
Then $V_n$ can be represented as the direct sum
$U\oplus W$, $U<V_n$, $\dim U=n-d$,
and each plane~$E_i$ takes the form
$\{\vec{u}+\vec{w}\colon\vec{u}\in E_i', \vec{w}\in W\}$,
where~$E_i'=E_i\cap U$ is an affine plane of~$U$ of dimension~$n-m-d$.
The planes $E_1',E_2',\ldots,E_{q^m}'$ determine a primitive partition of~$U$.
There are $c_q^*(n-d,m)$ ways to choose such a partition,
$\GaussCoeff{n}{d}{q}$ ways to choose~$W$ and, consequently, 
\begin{equation}\label{Eq.BRECTS.APartCount}
c_q(n,m)=\sum_{d=0}^{n-m}
\GaussCoeff{n}{d}{q} c_q^*(n-d,m).
\end{equation}

Denote by $L_i+L_j$ the subspace of $V_n$ consisting of the 
sums $\vec{v}+\vec{v}'$, 
where $\vec{v}$ runs over $L_i$ and $\vec{v}'$ runs over $L_j$.
For each distinct $i,j\in\{1,2,\ldots,q^m\}$ we have
$$
\dim(L_i\cap L_j)=\dim L_i+\dim L_j-\dim(L_i+L_j)
\geq 2(n-m)-n=n-2m.
$$
If $\dim(L_i\cap L_j)=n-2m$, then $L_i+L_j=V_n$
and $\vec{b}_j-\vec{b}_i=\vec{v}+\vec{v}'$ for some $\vec{v}\in L_i$,
$\vec{v}'\in L_j$. 
It means that
$$
|E_i\cap E_j|=|L_i\cap(L_j+\vec{v}+\vec{v}')|=
|(L_i-\vec{v})\cap(L_j+\vec{v}')|=|L_i\cap L_j|\neq 0,
$$
a contradiction.
Hence
\begin{equation}\label{Eq.BRECTS.APart.LiLj}
\dim(L_i\cap L_j)\geq n-2m+1.
\end{equation}

Under $m=1$ this inequality yields that~$L_1=\ldots=L_q=W$,
where~$\dim W=n-1$. 
Therefore, $c_q^*(1,1)=1$, $c_1^*(n,1)=0$ for $n\geq 2$,
and
$$
c_q(n,1)=\GaussCoeff{n}{n-1}{q}c_q^*(1,1)=\frac{q^n-1}{q-1}.
$$

For the case $q=2$ we also obtain the estimate
$$
c_2(n,2)=\GaussCoeff{n}{n-2}{2}+98\GaussCoeff{n}{n-3}{2}=
\frac{1}{3}(2^n-1)(2^{n-1}-1)(7\cdot 2^{n-1}-13)
$$ 
(cf. Example~\ref{Ex.BRECTS.4Row}) using the following result.

\begin{lemma}\label{Lemma.BRECTS.APartCount}
$c_2^*(2,2)=1$,
$c_2^*(3,2)=98$,
and $c_2^*(n,2)=0$ for $n\geq 4$.
\end{lemma}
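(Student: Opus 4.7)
My plan is to treat the cases $n=2$ and $n=3$ by direct enumeration and then rule out primitive partitions for $n\geq 4$ through a case analysis on the pattern of coincidences among the direction spaces $L_i$.

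For $n=2$ the unique partition of $V_2$ into four $0$-dimensional planes is the partition into singletons; all $L_i=\{\vec{0}\}$, so $W=\{\vec{0}\}$ and $c_2^*(2,2)=1$. For $n=3$ every two-element subset of $V_3$ is automatically an affine line over $\FF_2$, so the number of partitions of $V_3$ into four affine $1$-planes equals the number of perfect matchings on eight points, $8!/(2^{4}\cdot 4!)=105$. A partition is non-primitive iff all four $L_i$ coincide with a common $1$-dimensional subspace $L$, in which case it consists of the four cosets of $L$; there are $7$ such subspaces, hence $c_2^*(3,2)=105-7=98$.

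For $n\geq 4$ I first establish a pairing lemma: in any valid partition, $L_1=L_2$ forces $L_3=L_4$. Writing $L=L_1=L_2$, the union $E_1\cup E_2$ is a coset of the hyperplane $L'=L+\langle \vec{b}_2-\vec{b}_1\rangle$ and $E_3\cup E_4$ is the complementary $L'$-coset; therefore $L_3,L_4$ are hyperplanes of $L'$. If $L_3\neq L_4$, then $L_3+L_4=L'$, and $\vec{b}_4-\vec{b}_3\in L'=L_3+L_4$ would force $E_3\cap E_4\neq\varnothing$.

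With this in hand, I analyse the three possible patterns. (i) All $L_i$ coincide: $\dim W=n-2\geq 2$. (ii) $L_1=L_2\neq L_3=L_4$: by~\eqref{Eq.BRECTS.APart.LiLj}, $\dim W=\dim(L_1\cap L_3)\geq n-3\geq 1$. (iii) All $L_i$ distinct: pass to duals, where the four $2$-dimensional subspaces $L_i^\perp\subset V_n^*$ pairwise intersect in $1$-dimensional subspaces. If they share a common $1$-dimensional $\ell$, then all $L_i$ lie in the hyperplane $H=\ell^\perp$, and a size count forces exactly two of the $E_i$'s (say $E_1,E_2$) to cover $H$; but $L_1\neq L_2$ are then distinct hyperplanes of $H$, giving $L_1+L_2=H\ni\vec{b}_2-\vec{b}_1$ and hence $E_1\cap E_2\neq\varnothing$, a contradiction. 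Otherwise some triple, say $L_1^\perp,L_2^\perp,L_3^\perp$, has no common point; then $L_3^\perp\cap L_1^\perp$ and $L_3^\perp\cap L_2^\perp$ are distinct $1$-dimensional subspaces of $L_3^\perp$ that span it, so $L_3^\perp\subseteq U:=L_1^\perp+L_2^\perp$ (a $3$-dimensional subspace), and the same reasoning for $L_4^\perp$ gives $L_4^\perp\subseteq U$, whence $\dim W\geq n-3\geq 1$. The main obstacle will be this concurrent-dual-lines sub-case, where a naive dimension count on $\sum L_i^\perp$ is not sharp enough and the contradiction must instead come from the coset-covering structure on the hyperplane~$H$.
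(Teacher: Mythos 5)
Your argument is correct in substance but takes a genuinely different route from the paper's on the main point, $c_2^*(n,2)=0$ for $n\geq 4$. The paper works entirely on the primal side: it shows $L_1+L_2+L_3+L_4=V_n$, that all $d_{ij}=\dim(L_i\cap L_j)$ equal $n-3$, bounds the triple intersections $d_{ijk}$, and then uses a four-subspace dimension count to conclude $\dim W\geq n-6$, so that only $n=4,5,6$ remain, each of which is killed by a separate ad hoc contradiction. You instead classify the coincidence pattern of the $L_i$ via a pairing lemma ($L_1=L_2\Rightarrow L_3=L_4$, proved by your coset-of-a-hyperplane observation), dispose of the ``all equal'' and ``two pairs'' patterns at once, and dualize the ``all distinct'' pattern into four pairwise-meeting $2$-dimensional subspaces $L_i^\perp$: the concurrent configuration is excluded by the covering argument on $H=\ell^\perp$, and the non-concurrent one forces all four $L_i^\perp$ into a common $3$-dimensional $U$, whence $\dim W=n-3\geq 1$. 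This buys a uniform treatment of all $n\geq 4$ with no terminal case analysis and avoids the delicate multi-subspace dimension identity the paper relies on. Two small steps should be written out to close the argument: (a) the dichotomy in case (iii) is exhaustive, because if every triple of the $L_i^\perp$ had a common $1$-dimensional subspace, each such subspace would have to equal the $1$-dimensional $L_1^\perp\cap L_2^\perp$, making the four subspaces concurrent; (b) ``the same reasoning for $L_4^\perp$'' does not literally apply, since only the triple $\{1,2,3\}$ was assumed to have no common point --- but if $L_4^\perp\cap L_1^\perp=L_4^\perp\cap L_2^\perp$, this common line equals $L_1^\perp\cap L_2^\perp$ and differs from $L_4^\perp\cap L_3^\perp$ (otherwise the triple $\{1,2,3\}$ would share it), so $L_4^\perp$ again contains two distinct $1$-dimensional subspaces of $U$ and hence lies in $U$. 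With those two sentences added the proof is complete; your $n=2$ and $n=3$ counts agree with the paper's recursion-based computation.
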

\begin{proof}
The first equality is trivial.
Further, if $q=2$ then each partition of $V_3$ into two-element subsets
is also the partition into affine planes. 
There are $\frac{8!}{4!2^4}=105$ such distinct partitions and
$$
c_2^*(3,2)=105-\GaussCoeff{3}{1}{2}c_2^*(2,2)=98
$$ 
of them are primitive.

We prove the third equality by a contradiction.
Suppose that~$n\geq 4$ and $\{E_i=L_i+\vec{b}_i\colon i=1,2,3,4\}$ 
is a primitive partition of~$V_n$ into planes of dimension~$n-2$.
Let us analyze the properties of such hypothetical partition
which help to reveal a contradiction.
\begin{itemize}
\item[A.]
Let $U=L_1+L_2+L_3+L_4$. 
Prove that $U=V_n$. 

Note that $\dim U\geq \dim L_1=n-2$.
If $\dim U=n-2$,
then $L_1=L_2=L_3=L_4$ and the partition $\{E_i\}$ 
is not primitive for $n\geq 3$.

Suppose that $\dim U=n-1$ and let, without loss of generality,
$U=\{(\vec{x},0)\colon\vec{x}\in V_{n-1}\}$.
Denote $U_i=E_i\cap U$.
Since $\{E_i\}$ is a partition, $E_i=U_i+(0,\ldots,0,b_i)$ 
for some $b_i\in\FF_2$, where there are two $0$'s and two $1$'s 
among~$\{b_i\}$.
Assume for simplicity that $b_1=b_2$ and therefore $b_3=b_4$.
Then $U_1\cup U_2=V_{n-1}$ and $U_3\cup U_4=V_{n-1}$.
As we show later, it yields $L_1=L_2$ and $L_3=L_4$.
Now using~\eqref{Eq.BRECTS.APart.LiLj} we obtain
$$
\dim(L_1\cap L_2\cap L_3\cap L_4)=
\dim(L_1\cap L_3)\geq n-3
$$
and the partition $\{E_i\}$ is not primitive for $n\geq 4$.

\item[B.]
Prove that~$d_{ij}=\dim(L_i\cap L_j)=n-3$ for all~$1\leq i<j\leq 4$.

By~\eqref{Eq.BRECTS.APart.LiLj}, the numbers $d_{ij}\in\{n-3,n-2\}$. 
Suppose that $d_{ij}=n-2$ for some $i$ and~$j$, 
say for~$i=3$ and~$j=4$.
Then $L_3=L_4$,
\begin{align*}
\dim(L_1\cap L_2\cap L_3\cap L_4)&=
\dim(L_1\cap L_2\cap L_3)\\
&=\dim(L_1+L_2+L_3)-\dim L_1 -\dim L_2 -\dim L_3\\
&\phantom{~~~~~}+d_{12}+d_{13}+d_{23}\\
&\geq n-3(n-2)+3(n-3) = n-3,
\end{align*}
and the partition $\{E_i\}$ is not primitive for $n\geq 4$.

\item[C.]
Prove that $d_{ijk}=\dim(L_i\cap L_j\cap L_k)\in\{n-4,n-3\}$
for all~$1\leq i<j<k\leq 4$ and moreover $d_{ijk}\neq n-3$ for $n>4$.

Indeed, $d_{ijk}\leq d_{ij}=n-3$ and
\begin{align*}
d_{ijk}&=\dim L_i + \dim(L_j\cap L_k)-\dim(L_i+(L_j\cap L_k))\\
&\geq (n-2)+(n-3)-\dim(L_i+L_j)=n-4.
\end{align*}
If some coefficient $d_{ijk}$ is equal to $n-3$, 
say $d_{123}=n-3$, then
\begin{align*}
\dim((L_1\cap L_2\cap L_3)+L_4)&=
d_{123}+\dim L_4-\dim(L_1\cap L_2\cap L_3\cap L_4)\\
&=(n-3)+(n-2)+0=2n-5.
\end{align*}
On the other hand,
$$
\dim((L_1\cap L_2\cap L_3)+L_4)\leq \dim(L_1+L_4)=n-1
$$
and we obtain the inequality $2n-5\leq n-1$ that doesn't hold for~$n>4$.
\end{itemize}

Gathering A, B and~C, we get
\begin{align*}
\dim(L_1\cap L_2\cap L_3\cap L_4)&=
\sum\dim L_i -
\sum d_{ij}+
\sum d_{ijk}-
\dim\left(\sum L_i\right)\\
&=4(n-2)-6(n-3)+\sum d_{ijk}-n\\
&\geq 4(n-2)-6(n-3)+4(n-4)-n=n-6.
\end{align*}
Consequently, the partition $\{E_i\}$ is primitive only if~$n\leq 6$ and
$\sum d_{ijk}=3n-10$.
Check these conditions.

If $n=4$, then two coefficients $d_{ijk}$ are equal to $n-3=1$,
say $d_{123}=d_{124}=1$. 
Then $\dim(L_1\cap L_2\cap L_3\cap L_4)=1$, a contradiction.
If $n=5$, then there exists~$d_{ijk}=n-3=2$,
a contradiction to the second part of C. 
Finally, if~$n=6$, then
\begin{align*}
\dim(L_1\cap L_3)&=\dim L_1 + \dim L_3 - \dim(L_1 + L_3)\\
&\leq 8-\dim((L_1\cap L_2)+(L_3\cap L_4))\\
&=8-\left(
\dim(L_1\cap L_2)+\dim(L_3\cap L_4)-\dim(L_1\cap L_2\cap L_3\cap L_4)
\right)\\
&=8-(3+3-0)=2
\end{align*}
that contradicts B.
\end{proof}

The above lemma allows to account 
all partitions of~$V_{r+2}$ into planes of dimension~$r$
during the proof of the following result.

\begin{proposition}\label{Prop.BRECTS.APart.2}
Let~$q=2$, $r\geq 2$, $\boxup{f}\in\boxup{\func{B}}_{2,r+2}$ be
constructed by Proposition~\ref{Prop.BRECTS.APartConstr}
and $f(\vec{u},\vec{x})$, $\vec{u}\in V_2$, $\vec{x}\in V_{r+2}$,
be a corresponding bent function.
By separate affine permutations of the variables $\vec{u}$ and $\vec{x}$,
$f$ can be transformed into one of the following functions:
\begin{align}\label{Eq.BRECTS.APart.21}
g(\vec{u},\vec{x})&=
u_1 u_2(g_1(x_3,\vec{y})+g_2(x_3,\vec{y})+g_3(x_3,\vec{y})+g_4(x_3,\vec{y}))\\
\notag
&+u_1(g_1(x_3,\vec{y})+g_3(x_3,\vec{y})+x_1)\\
\notag
&+u_2(g_1(x_3,\vec{y})+g_2(x_3,\vec{y})+x_2)\\
\notag
&+g_1(x_3,\vec{y}),\\
\label{Eq.BRECTS.APart.22}
g(\vec{u},\vec{x})&=
u_1 u_2(g_1(x_3,\vec{y})+g_2(x_3,\vec{y})+g_3(x_2,\vec{y})+
g_4(x_2,\vec{y})+x_2+x_3)\\
\notag
&+u_1(g_1(x_3,\vec{y})+g_3(x_2,\vec{y})+x_1)\\
\notag
&+u_2(g_1(x_3,\vec{y})+g_2(x_3,\vec{y})+x_2)\\
\notag
&+g_1(x_3,\vec{y}),\\
\label{Eq.BRECTS.APart.23}
g(\vec{u},\vec{x})&=
u_1 u_2(g_1(x_3,\vec{y})+g_2(x_1,\vec{y})+g_3(x_1+x_2+x_3,\vec{y})+
g_4(x_2,\vec{y})+x_1)\\
\notag
&+u_1(g_1(x_3,\vec{y})+g_3(x_1+x_2+x_3,\vec{y})+x_2+x_3)\\
\notag
&+u_2(g_1(x_3,\vec{y})+g_2(x_1,\vec{y})+x_2)\\
\notag
&+g_1(x_3,\vec{y}),
\end{align}
where~$\vec{x}=(x_1,x_2,x_3,\vec{y})$, $\vec{y}\in V_{r-1}$, 
$g_i\in\func{B}_r$.
\end{proposition}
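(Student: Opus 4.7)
The plan is to reduce the statement to a classification of \emph{labeled} partitions $\{E_{\vec u}\colon\vec u\in V_2\}$ of $V_{r+2}$ into four $r$-dimensional affine planes, up to the action of $\AGL_2$ on labels and $\AGL_{r+2}$ on coordinates, and then to expand $f$ in each canonical configuration. By Prop.~\ref{Prop.BRECTS.APartConstr} and \eqref{Eq.BRECTS.StretchSpectrum}, the row $\hatup{f_{\vec u}}$ of $\boxup f$ is the Walsh--Hadamard transform of a stretching, so $f_{\vec u}(\vec x)=g_{\vec u}(\vec x A_{\vec u}^{\rm T})+\vec b_{\vec u}\cdot\vec x$, where $E_{\vec u}$ is the row space of $A_{\vec u}$ shifted by~$\vec b_{\vec u}$. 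Thus $f$ is completely determined by the labeled partition and the four bent functions $g_{\vec u}\in\func B_r$, and the elementary transformations A1, A2, B1, B2 of Prop.~\ref{Prop.BRECTS.AEquiv} realize the $\AGL_{r+2}$-action we need on $\vec x$.

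Next I would set $W=\bigcap_{\vec u}L_{\vec u}$, where $L_{\vec u}$ is the direction of $E_{\vec u}$. Inequality \eqref{Eq.BRECTS.APart.LiLj} together with the argument used in the proof of Lemma~\ref{Lemma.BRECTS.APartCount} force $\dim W\in\{r-1,r\}$. If $\dim W=r$ all $L_{\vec u}$ coincide and the four planes are parallel (Case~1). If $\dim W=r-1$, passing to the quotient $V_{r+2}/W\cong V_3$ gives a primitive partition of $V_3$ into four pairs; a short case analysis on the multiplicities of pair directions (the cases $(3,1)$ and $(2,1,1)$ are easily ruled out) leaves only the types $(2,2)$ and $(1,1,1,1)$, the latter having directions that sum to~$\vec 0$ in~$V_3$. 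A brief orbit count then shows that $\AGL_3$ acts transitively on each primitive type: $\GL_3$ is transitive on unordered pairs of distinct nonzero vectors and on the seven $4$-subsets of $V_3\setminus\{\vec 0\}$ with zero sum, while the $V_3$-translations permute the partitions sharing a given direction set. Separately, $\AGL_2$ is transitive on the three perfect matchings of $V_2$ and on arbitrary bijective labelings of four elements, which absorbs the remaining freedom in~$\vec u$.

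Finally, in each case I would fix one canonical partition and compute. For Case~2 one takes $E_{(0,u_2)}=\{(0,u_2,\ast,\vec y)\}$ and $E_{(1,u_2)}=\{(1,\ast,u_2,\vec y)\}$, yielding the restrictions $f_{(0,0)}=g_1(x_3,\vec y)$, $f_{(0,1)}=g_2(x_3,\vec y)+x_2$, $f_{(1,0)}=g_3(x_2,\vec y)+x_1$, $f_{(1,1)}=g_4(x_2,\vec y)+x_1+x_3$; for Case~3 the four planes have directions $\langle(0,0,1)\rangle,\langle(1,0,0)\rangle,\langle(1,1,1)\rangle,\langle(0,1,0)\rangle$ modulo~$W$, with $E_{(0,0)}$ through~$\vec 0$. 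Reassembling
$f(\vec u,\vec x)=\sum_{\vec u_0\in V_2}[\vec u=\vec u_0]\,f_{\vec u_0}(\vec x)$
via the $\FF_2$-indicator $[\vec u=(a,b)]=(1+u_1+a)(1+u_2+b)$ and collecting the coefficients at $1,u_1,u_2,u_1u_2$ reproduces exactly \eqref{Eq.BRECTS.APart.21}, \eqref{Eq.BRECTS.APart.22} and \eqref{Eq.BRECTS.APart.23}. The main obstacle is the structural step: classifying the labeled partitions into three $\AGL_2\times\AGL_{r+2}$-orbits; once that is settled, the verification of the three explicit formulas is a direct but lengthy polynomial expansion.
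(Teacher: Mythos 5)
Your proposal is correct and follows essentially the same route as the paper: reduce via Lemma~\ref{Lemma.BRECTS.APartCount} to partitions of $V_3$ into pairs, classify them into three canonical types up to affine equivalence, read off the restrictions to $\vec{x}$ from the stretching formula~\eqref{Eq.BRECTS.StretchSpectrum}, and reassemble $g$ with the indicator expansion $\sum_{\alpha_1,\alpha_2}(u_1+\alpha_1+1)(u_2+\alpha_2+1)g(\alpha_1,\alpha_2,\vec{x})$. Your direction-multiplicity and orbit-counting argument merely fills in the classification step that the paper dismisses as ``easy to check,'' and your canonical representatives and resulting restrictions agree with those in the paper.
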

\begin{proof}
Consider partitions of~$V_3$ into planes of dimension~$1$.
Denote such a plane~$\{\vec{b},\vec{b}+\vec{e}\}$ by~$[\vec{b},\vec{e}]$.
Let~$\vec{e}_i$ be the vector of $V_3$ having only the $i$th
coordinate nonzero.
It is easy to check that every partition of~$V_3$ 
can be transformed into one of the following
\begin{align*}
&\{[\vec{0},\vec{e}_3], [\vec{e}_2,\vec{e}_3],
[\vec{e}_1,\vec{e}_3], [\vec{e}_1+\vec{e}_2,\vec{e}_3]\},\\
&\{[\vec{0},\vec{e}_3], [\vec{e}_2,\vec{e}_3],
[\vec{e}_1,\vec{e}_2], [\vec{e}_1+\vec{e}_3,\vec{e}_2]\},\\
&\{[\vec{0},\vec{e}_3], [\vec{e}_2,\vec{e}_1],
[\vec{e}_2+\vec{e}_3,\vec{e}_1+\vec{e}_2+\vec{e}_3],
[\vec{e_1}+\vec{e}_3,\vec{e}_2]\}
\end{align*}
by an affine permutation of coordinates.
 
Choose some of these partitions, 
replace each its plane
$[\vec{b},\vec{e}]=\{\vec{b}+\alpha\vec{e}\colon\alpha\in\FF_2\}$ by
$\{(\vec{b},\vec{0})+\alpha(\vec{e},\vec{y})\colon
\alpha\in\FF_2,\ \vec{y}\in V_{r-1}\}$
and obtain the partition of~$V_{r+2}$ 
into the planes of dimension~$r$.
Call such a partition canonical
(for example, a canonical partition was used in 
Example~\ref{Ex.BRECTS.APartConstr}).
Lemma~\ref{Lemma.BRECTS.APartCount} implies that
each partition of~$V_{r+2}$ can be transformed into some canonical 
by an affine permutation of coordinates.
Also introduce a canonical ordering of planes of partitions 
by which the lexicographically minimal vectors of consecutive planes 
are increased (cf. Example~\ref{Ex.BRECTS.APartConstr}).

Let a bent rectangle $\boxup{f}$ be constructed by 
Proposition~\ref{Prop.BRECTS.APartConstr} using a partition~$\{E_\vec{u}\}$.
By an affine permutation of columns of $\boxup{f}$, 
the partition~$\{E_{\vec{u}}\}$ can be transformed into some canonical
and by an affine permutation of rows, 
the canonical ordering of planes can be achieved.
Thus, it is sufficient to consider bent rectangles $\boxup{g}$ 
built by the canonical partitions $\{E_\vec{u}\}$ under the 
canonical ordering and determine the corresponding bent functions
$g(\vec{u},\vec{x})$.
To do this, we can use the stretching equations~\eqref{Eq.BRECTS.StretchFunc}, 
\eqref{Eq.BRECTS.StretchSpectrum} to calculate
the restrictions of~$g$ to~$\vec{x}$ and then utilize the representation
$$
g(u_1,u_2,\vec{x})=\sum_{\alpha_1,\alpha_2\in\FF_2}
(u_1+\alpha_1+1)(u_2+\alpha_2+1)g(\alpha_1,\alpha_2,\vec{x}).
$$

For example, if $\boxup{g}$ is built by the third canonical partition,
then its restrictions to~$\vec{x}$ look as follows:
\begin{align*}
g(0,0,\vec{x})&=
g_1(x_3,\vec{y}),\\
g(0,1,\vec{x})&=
g_2(x_1,\vec{y})+x_2,\\
g(1,0,\vec{x})&=
g_3(x_1+x_2+x_3,\vec{y})+x_2+x_3,\\
g(1,1,\vec{x})&=
g_4(x_2,\vec{y})+x_1+x_3
\end{align*}
and we obtain~\eqref{Eq.BRECTS.APart.23}.
\end{proof}

It is interesting that if $\boxup{f}\in\boxup{\func{B}}_{2,r+2}$ 
is constructed by Proposition~\ref{Prop.BRECTS.APartConstr} and
$f_1(\vec{x})$, 
$f_2(\vec{x})$,
$f_3(\vec{x})$,
$f_4(\vec{x})$
are the restrictions of $f(\vec{u},\vec{x})$ to $\vec{x}$,
then all the functions $f_i+f_j$, $1\leq i < j\leq 4$, are balanced,
that is, they take the values $0$ and $1$ equally often.

Indeed, if $g(\vec{u},\vec{x})$ has one of the 
forms~\eqref{Eq.BRECTS.APart.21}~--- \eqref{Eq.BRECTS.APart.23},
then each sum of two its distinct restrictions to $\vec{x}$ is balanced.
It is easily follows from the form of $g$ and the fact that 
$h(\vec{x})=h^*(x_1,\ldots,x_{k-1},x_{k+1},\ldots,x_{r+2})+x_k$
and all derived functions $\sigma(h)(\vec{x})$, $\sigma\in\AGL_{r+2}$,
are balanced.
By Proposition~\ref{Prop.BRECTS.APart.2}, $f$ can be obtained from~$g$ 
by separate affine permutations of~$\vec{u}$ and $\vec{x}$.
But a permutation of $\vec{u}$ only rearrange the restrictions,
and a permutation of~$\vec{x}$ doesn't change the balance of their sums.


\end{sloppypar}
\end{document}